\setlist[enumerate]{leftmargin=1.5pc}
\setlist[enumerate,2]{leftmargin=0.75pc}
\newcommand{\ds}[1]{\displaystyle{#1}}
\newcommand{\dive}{\mathop\mathrm{div}}
\newcommand{\hatdiv}{\hat{\mathrm{div}}}
\newcommand{\curl}{\mathop\mathrm{curl}}
\newcommand{\Hdiv}[1]{H(\mathrm{div},{#1})}
\newcommand{\vi}{{v^{(i)}}}
\newcommand{\vj}{{v^{(j)}}}
\newcommand{\kt}{\tilde{k}^{(i)}}
\newcommand{\ktm}{\tilde{k}^{(i-1)}}
\newcommand{\kto}{\tilde{k}^{(0)}}
\newcommand{\CT}{C_\TT}
\newcommand{\tmax}{t_{\mathrm{max}}}
\newcommand{\diam}{\ensuremath{\mathop\mathrm{diam}}}
\newcommand{\uu}{\mathtt{u}}
\newcommand{\ww}{\mathtt{w}}
\newcommand{\mM}{\mathtt{M}}
\newcommand{\mU}{\mathtt{U}}
\newcommand{\mR}{\mathtt{R}}
\newcommand{\mH}{\mathtt{H}}
\newcommand{\mS}{\mathtt{S}}
\newcommand{\mG}{\mathtt{G}}
\newcommand{\ip}[1]{\langle {#1} \rangle}
\newcommand{\At}{A^{(t)}}
\newcommand{\Aj}{A^{(j)}}
\newcommand{\pp}{{\scriptstyle{P}}}
\newcommand{\tT}{{\scriptstyle{T}}}
\newcommand{\TT}{\mathcal{T}}
\newcommand{\EE}{\mathcal{E}}
\newcommand{\FF}{\mathcal{F}}
\newcommand{\hK}{\hat{K}}
\newcommand{\hD}{\hat{D}}
\newcommand{\vPhi}{\varPhi}
\newcommand{\veps}{\varepsilon}
\newcommand{\vphi}{\varphi}
\newcommand{\divxh}{\hat{\mathrm{div}}_x}
\newcommand{\divx}{{\mathrm{div}}_x}
\newcommand{\gradx}{{\mathrm{grad}}_x}
\newcommand{\divh}{\mathop{\hat{\mathrm{div}}}}
\newcommand{\gradh}{\mathop{\hat{\mathrm{grad}}}}
\newcommand{\RRR}{\mathbb{R}}
\newcommand{\om}{\varOmega}
\numberwithin{equation}{section}
\numberwithin{table}{section}
\theoremstyle{plain}
\newtheorem{theorem}{Theorem}[section]
\theoremstyle{remark}
\newtheorem{remark}{Remark}[section]
\newtheorem{algorithm}{Algorithm}[section]
\newtheorem{example}{Example}[section]
\title[MTP schemes]{Mapped Tent Pitching Schemes for \\ Hyperbolic Systems}
\author{J.~Gopalakrishnan}
\address{Portland State University, PO Box 751, Portland OR 97207, USA}
\email{gjay@pdx.edu}
\author{J.~Sch{\"{o}}berl} 
\address{Wiedner Hauptstra\ss e 8-10, Technische Universit\"at Wien,
  1040 Wien, Austria} 
\email{joachim.schoeberl@tuwien.ac.at}
\author{C.~Wintersteiger}
\address{Wiedner Hauptstra\ss e 8-10, Technische Universit\"at Wien,
  1040 Wien, Austria} 
\email{christoph.wintersteiger@tuwien.ac.at}
\thanks{This work was supported in part by the NSF under DMS-1318916.}
\begin{document}

  \begin{abstract}
    A spacetime domain can be progressively meshed by tent shaped
    objects.  Numerical methods for solving hyperbolic systems using
    such tent meshes to advance in time have been proposed
    previously. Such schemes have the ability 
    to advance in time by different amounts at different
    spatial locations. This paper explores a technique by which 
    standard discretizations, including explicit time stepping, can be
    used within tent-shaped spacetime domains. 
    The technique transforms the equations within a spacetime tent to
    a domain where space and time are separable. After detailing
    techniques based on this mapping, several examples including the
    acoustic wave equation and the Euler system are considered.
  \end{abstract}

\keywords{local time stepping, wave, causality, Piola, entropy residual, gas dynamics}

\maketitle

\section{Introduction}   \label{sec:intro}

We introduce a new class of methods called Mapped Tent Pitching (MTP)
schemes for numerically solving hyperbolic problems. These schemes can
be thought of as fully explicit or locally implicit schemes on
unstructured spacetime meshes obtained by a process known in the
literature as tent pitching. This process creates an advancing front
in spacetime made by canopies of tent-shaped regions. 
Spacetime tents
are 
erected (with time as the last or vertical dimension in 
spacetime -- see Figure~\ref{fig:tents_levels}) 
so that causality constraints of the hyperbolic problem are 
never violated 
and the hyperbolic problem is solved progressively in layers of tents. 
Such meshing processes were named tent pitching
in~\cite{ErickGuoySulli05,UngorSheff02}. In this paper, we will
refer to tent pitching as a discretization scheme together
with all the attendant meshing techniques. In fact, the main focus of this
paper is not on meshing, but rather on novel discretization techniques that
exploit tent pitched meshes.

\begin{figure}
  \centering
  \begin{subfigure}[t]{0.4\textwidth}
    \includegraphics[width=\textwidth]{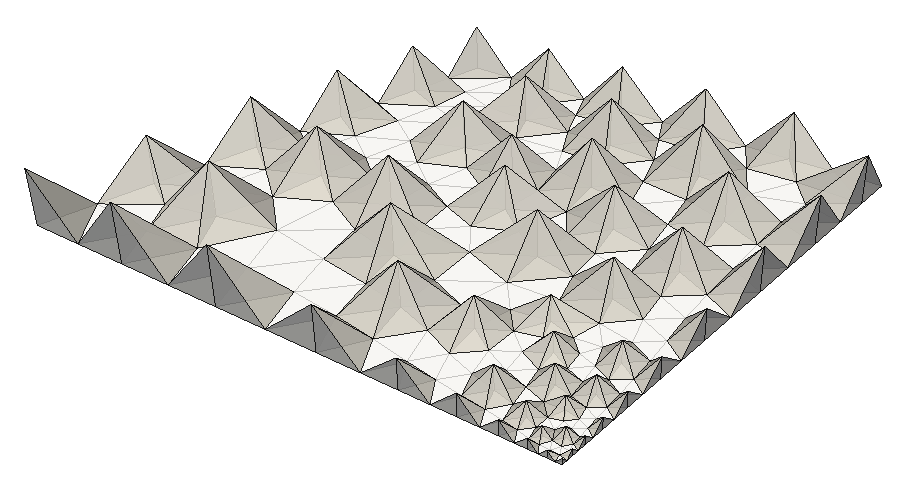}    
    \caption{Initial tents forming Layer~1}
    \label{fig:tents1}
  \end{subfigure}
  \begin{subfigure}[t]{0.4\textwidth}
    \includegraphics[width=\textwidth]{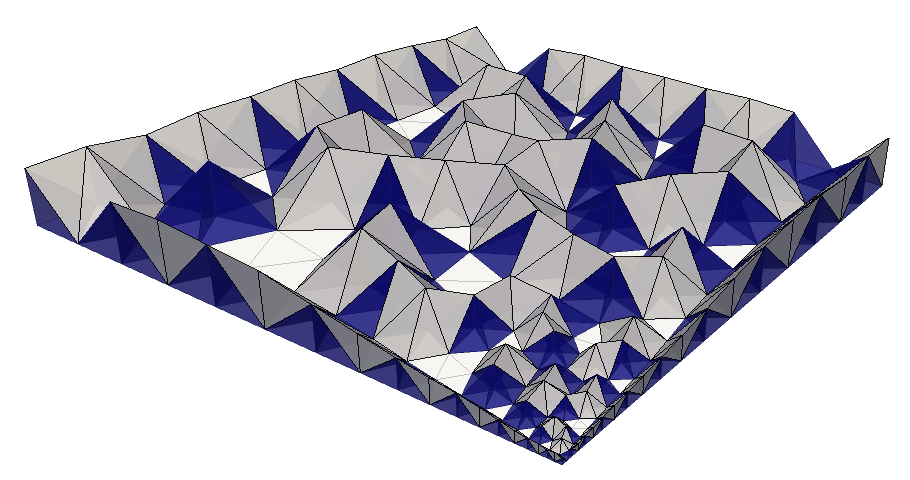}    
    \caption{Layer~2 tents in gray (and layer~1 tents in blue)}
    \label{fig:tents2}
  \end{subfigure}
  \begin{subfigure}[t]{0.4\textwidth}
    \includegraphics[width=\textwidth]{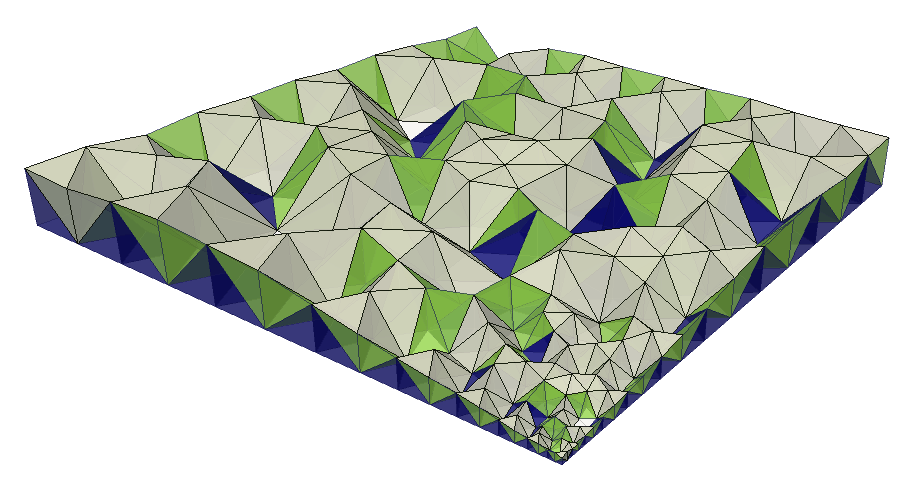}    
    \caption{Layer~3 tents in gray (and tents of previous layers in other colors)}
    \label{fig:tents3}
  \end{subfigure}
  \begin{subfigure}[t]{0.4\textwidth}
    \includegraphics[width=\textwidth]{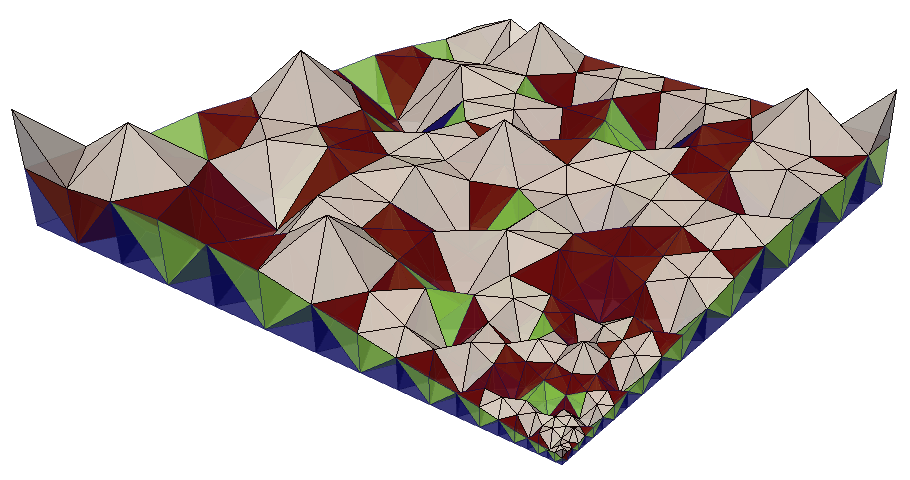}    
    \caption{Layer~4 tents (in gray)}
    \label{fig:tents4}
  \end{subfigure}
  \caption{Parallel tents within different layers}
  \label{fig:tents_levels}
\end{figure}

Today, the dominant discretization technique that use tent pitched
meshes is the spacetime discontinuous Galerkin (SDG) method.  Its
origins can be traced back to~\cite{LowriRoeLeer95,Richt94a}. It has
seen active development over the years in engineering
applications~\cite{MilleHaber08,PalanHaberJerra04,YinAcharSobh00} and
has also motivated several works in numerical
analysis~\cite{FalkRicht99,GopalMonkSepul15,MonkRicht05}. The SDG
schemes use piecewise polynomials in the spacetime elements
(with no continuity constraints
across mesh element interfaces) and a DG (discontinuous Galerkin)
style spacetime discretization. Different
prescriptions of DG fluxes result in different methods. Advanced
techniques, including adaptive spacetime mesh refinement maintaining
causality~\cite{Mont11}, and exact 
conservation~\cite{AbediPetraHaber06}, have been realized for SDG
methods.

The above-mentioned research into SDG methods has abundantly clarified
the many advantages that tent pitched meshes offer.  Perhaps the
primary advantage they offer is a rational way to build high order
methods (in space and time) that incorporates spatial adaptivity and
locally varying time step size, even on complex structures.  Without
tent meshes, many standard methods resort to ad hoc techniques
(interpolation, extrapolation, projection, etc.) for locally adaptive
time stepping~\cite{GandeHalpe13} within inexpensive explicit
strategies. If one is willing to pay the expense of solving global
systems on spacetime slabs~\cite{Neumu13,WangPerss15}, then time and
space adaptivity are easy.  In between these options, there are
interesting alternative methods, without using tents, able to perform
explicit local time stepping while maintaining high order
accuracy~\cite{DiazGrote09,GroteMehliMitko15} by dividing the spatial
mesh into fine and coarse regions.  The concepts we present using
tents provide a different avenue for locally advancing in time.  When
using tents, the height of the tent pole must be restricted to ensure
the solvability of the hyperbolic problem in the tent.  This is
referred to as the {\em causality constraint} and it restricts the
maximal time advance possible at a spatial point. Even if akin to
the Courant-Friedrichs-Levy (CFL) constraint, the causality constraint
does not arise from a discretization and is different from the CFL
constraint.

The main novelty in MTP schemes is a mapping of tents to cylindrical
domains where space and time can be separated, so that standard
spatial discretizations combined with time stepping can be used for
solving on each tent.  MTP schemes proceed as follows: (i) Construct a
spacetime mesh using a tent pitching (meshing) algorithm. (ii) Map the
hyperbolic equation on each tent to a spacetime cylinder.
(iii)~Spatially discretize on the cylinder using any appropriate
existing method. (iv)~Apply an explicit or implicit (high order) time
stepping within the cylinder. (v)~Map the computed solution on the
cylinder back to the tent. Proceeding this way, tent by tent, we
obtain the entire solution.

As a result of this mapping strategy, we are able to construct fully
explicit tent pitching schemes for the first time. We call these {\em
  explicit MTP schemes}.  (Note that the possibility to perform
explicit time stepping within a
tent did not exist with SDG methods.)  Explicit MTP schemes
map each tent to a cylinder, where
space and time can be separated, use a spatial discretization
and thereafter apply an explicit time
stepping to compute the tent solution.  Using explicit MTP schemes, we
are able to bring the well-known cache-friendliness and data locality
properties of explicit methods into the world of local time stepping
through unstructured spacetime tent meshes.  In a later section, we
will show the utility of explicit MTP schemes by applying it to a
complex Mach~3 wind tunnel problem using an existing DG discretization
in space and an explicit time stepping. Note that there is no need to
develop a new spacetime formulation on tents for the Euler system in
order to apply the MTP scheme.

The new mapping strategy also permits the creation of another class
of novel methods which we call {\em locally implicit MTP
  schemes}. Here, after the mapping each tent to a cylinder,
we use an implicit time stepping
algorithm. This requires us to solve a small spatial system (local to
the tent) in order to advance the hyperbolic solution on each tent.
This approach also retains the advantage of being able to use standard
existing spatial discretizations and well-known high order implicit
Runge-Kutta time stepping.  While the explicit MTP schemes are
constrained by both the causality constraint and a CFL constraint
imposed by the choice of the spatial discretization, in locally
implicit MTP schemes there is no CFL constraint.  The causality
constraint applies, and depends on the {\em local} tent geometry and
local wavespeed, but is independent of degree ($p$) of the spatial
discretization. This provides one point of contrast against
traditional methods, whose global timestep
($h_{\mathrm{min}}/p_{\mathrm{max}}^2$) depends on the smallest
element size ($h_{\mathrm{min}}$) and the largest degree
($p_{\mathrm{max}}$) over the entire mesh.

In the remainder of the paper, we will be concerned with hyperbolic
problems that fit into a generic definition described next. Let $L$
and $N$ be integers not less than $1$.  All the problems considered
can be written as a system of $L$ equations on a spacetime
cylindrical domain $\om = \om_0 \times (0,\tmax)$, where the spatial
domain $\om_0$ is contained in $\RRR^N$.  Given sufficiently regular
functions
$
f: \om \times \RRR^L \to \RRR^{L \times N},$
$g: \om \times \RRR^L \to \RRR^L,$
 and $
b : \om \times \RRR^L \to \RRR^L,$
the problem is to find a function $u : \om \to \RRR^L$
satisfying
\begin{equation}
  \label{eq:1}
  {\partial}_t g(x,t,u) + \divx f(x,t,u) + b(x,t,u) = 0
\end{equation}
where ${\partial}_t = {\partial} / {\partial} t$ denotes the time derivative and 
$\divx(\cdot)$ denotes the spatial divergence operator applied row wise
to matrix-valued functions. To be clear, 
the system~\eqref{eq:1}  can be rewritten, using subscripts to denote
components (e.g., $b_l$ denotes the $l$th component of $b$, $f_{li}$
denotes the $(l,i)$th component of $f$, etc.), as
\begin{equation}
  \label{eq:2}
  {\partial}_t g_l(x,t,u(x,t)) 
  + 
  \sum_{i=1}^{N} {\partial}_i (f_{li}(x,t, u(x,t)) + b_l(x,t,u(x,t)) = 0,
\end{equation}
for $  l=1,\ldots, L.$
Here and throughout, ${\partial}_i = {\partial} / {\partial} x_i$ denotes differentiation
along the $i$th direction in $\RRR^{N}$. In examples, we will
supplement~\eqref{eq:2} by initial conditions on $\om_0$ and boundary
conditions on ${\partial}\om_0 \times (0,\tmax).$

We assume that the system~\eqref{eq:1} is {\em hyperbolic in the
  $t$-direction}, as defined in~\cite{Dafer10}. Note that in
particular, this requires that for any fixed $x,t,$ and $u$, the $L
\times L$ derivative matrix $D_ug$ (whose $(l,m)$th entry is ${\partial}
g_m/{\partial} u_l$) is invertible, i.e., 
\begin{equation}
  \label{eq:hyperdet}
  \det[ D_u g] \ne 0.
\end{equation}
Hyperbolicity also provides, for each direction vector and each point
$x,t,u$, a series of real eigenvalues called characteristic
speeds. Let $c(x,t,u)$ denote the maximum of these speeds for all
directions.  For simplicity, we assume that $c(x,t,u)$ is given (even
though it can often be computationally estimated), so that the meshing
process in the next section can use it as input.

Geometrical definitions and meshing algorithms are given in
Section~\ref{sec:tents} (Tents).  Transformation of tents and
hyperbolic equations within them is the subject of
Section~\ref{sec:maps} (Maps).  Two distinct approaches to designing
novel MTP methods are presented in Section~\ref{sec:twoapproaches}. In
Section~\ref{sec:wave}, we discuss a locally implicit MTP method for
the acoustic wave equation in detail. In Section~\ref{sec:euler},
after giving general details pertaining to treatment of nonlinear
hyperbolic conservation laws, we focus on an explicit MTP scheme for
Euler equations.

\section{Tents} \label{sec:tents}

The MTP schemes we present in later sections fall into the category
of methods that use tent pitching for unstructured spacetime
meshing. Accordingly, in this section, we first give a general
description of tent meshing, clarifying the mathematical
meaning of words we have already used colloquially such as ``tent,''
``tent pole,'' ``advancing front,'' etc., and then give details of a
specific meshing algorithm that we have chosen to implement.

\subsection{Overview of a tent pitching scheme}  \label{ssec:pitch}

We now describe how a tent pitching scheme advances the numerical
solution in time. We mesh $\om_0$ by a simplicial conforming shape
regular finite element mesh $\TT.$ The mesh is unstructured to
accomodate for any intricate features in the spatial geometry or in
the evolving solution.  Let $P_1(\TT)$ denote the set of continuous
real-valued functions on $\om_0$ which are linear on each element of
$\TT$.  Clearly any function in $P_1(\TT)$ is completely determined by
its values at the vertices $v_l,$ $l=1,\ldots, N_\TT$, of the
mesh~$\TT$.

At the $i$th step of a tent pitching scheme, the numerical solution is
available for all $x \in \om_0$ and all $0< t <\tau_i(x)$. The
function $\tau_i$ is in $P_1(\TT)$.  The graph of $\tau_i$, denoted by
$S_i$, and is called the ``advancing front'' (see
Figure~\ref{fig:tents_levels}.)  We  present a serial version of
the algorithm first. A parallel generalization is straightforward as
mentioned in Remark~\ref{rem:partents}.  A tent pitching scheme
updates $\tau_i$ within the following general algorithmic outline:

\begin{algorithm} \hfill
\begin{enumerate} 
\item    Initially, set $\tau_0 \equiv 0$. Then $S_0=\om_0$. The solution on
   $S_0$ is determined by the initial data on $\om_0$. 

\item For $i=1,2,\ldots,$ do:
  \begin{enumerate}

  \item Find a mesh vertex $\vi$ where good relative progress in
    time can be made and calculate the height (in time) $k_i$ by which
    we can move the advancing front at $v_i$. One strategy to do this
    is detailed below in Algorithm~\ref{alg:pitchloc}.

  \item Given the solution on the current advancing front $S_{i-1}$, 
    pitch a ``spacetime tent'' $K_i$ by erecting a ``tent pole'' of
    height $k_i$ at the point $(\vi, \tau_{i-1}(\vi))$ on~$S_{i-1}$.
    Let $\eta_i \in P_1(\TT)$ be the unique function that equals one
    at $\vi$ and is zero at all other mesh vertices.  Set
    \begin{equation}
      \label{eq:tau}
      \tau_i = \tau_{i-1} + k_i \eta_i      
    \end{equation}
    Define the ``vertex patch'' $\om_v$ of a mesh vertex $v$
    as the (spatial) open
    set in $\RRR^N$ that is the interior of the union of all simplices
    in $\TT$ connected to~$v$.  Then the tent $K_i$ can be expressed as
    \begin{equation}
      \label{eq:Ki}
      K_i = \{ (x,t): \; x \in \om_{\vi},\; \tau_{i-1}(x) < t < \tau_i(x)\}.
    \end{equation}

  \item \label{item:3} Numerically solve~\eqref{eq:1} on $K_i$ (e.g., by the
    methods proposed in the later sections of this paper).  Initial
    data is obtained from the given solution on $S_{i-1}$. If $\vi \in
    {\partial}\om_0$, then the boundary conditions required to
    solve~\eqref{eq:1} on $K_i$ are obtained from the given boundary
    conditions on the global boundary ${\partial}\om_0 \times (0,\tmax)$.

  \item If $\tau(v) \ge \tmax$ for all mesh vertices $v$, then exit.
  \end{enumerate}
\end{enumerate}
\end{algorithm}

The height of the tent pole $k_i$ at each step should be determined
using the causality constraint so that~\eqref{eq:1} is solvable on
$K_i$. The choice of the vertex $\vi$ should be made considering the
height of the neighboring vertices. Other authors have studied these
issues~\cite{ErickGuoySulli05,UngorSheff02} and given appropriate
advancing front meshing strategies. Next, we describe a simple
strategy which we have chosen to implement. It applies verbatim in
both two and three space dimensions.

\subsection{Algorithm to mesh by tents}

To motivate our meshing strategy, first let $\bar c(x)$ denote a given
(or computed) approximation to the maximal characteristic speed at a
point $(x,\tau_{i-1}(x))$ on the advancing front $S_{i-1}$, e.g.,
$\bar c(x) = c(x,\tau_{i-1}(x), u(x,\tau_{i-1}(x))$, where $u$ is the
computed numerical solution. We want to ensure that, for all
$x \in \om_0$, we have
\begin{equation}
  \label{eq:cfl1}
| \gradx \tau_{i}(x) | \;\le\; \frac{1}{\bar c (x)}
\end{equation}
at every step $i$. Here $| \cdot|$ denotes the Euclidean norm. 
This is our CFL condition.

For simplicity, we now assume that $c$ is independent of time and
impose the following CFL condition which is more stringent
than~\eqref{eq:cfl1}:
\begin{equation}
  \label{eq:cfl2}
  | \gradx \tau_{i} |\big|_T \;\le\; \frac{1}{c_T} ,\qquad \text{ for all } 
  T \in \TT,
\end{equation}
where $c_T = \max_{x\in T} \bar c(x).$ In practice, it is easier to
work with the following sufficient condition
\begin{equation}
  \label{eq:13}
  \frac{\tau_i(e_1) - \tau_i(e_2)}{|e|} \le \frac{\CT}{c_e},
  \qquad \text{ for all mesh edges } e,
\end{equation}
where $e_1$ and $e_2$ are the mesh vertices that are endpoints of the
edge $e$ of length $|e|$, $c_e$ is the maximum of $c_T$ over all
elements $T$ which have $e$ as an edge, and $\CT$ is a constant that
depends on the shape regularity of the mesh $\TT$. It is easy to see
that~\eqref{eq:13} implies~\eqref{eq:cfl2}.

To obtain an advancing front satisfying~\eqref{eq:13} at all
stages~$i$, we maintain a list of potential time advance $\kt_l$ that
can be made at any vertex $v_l$. Let $\EE_l$ denote the set of all
mesh edges connected to the vertex $v_l$ and suppose edge endpoints
are enumerated so that $e_1 = v_l$ for all $e \in \EE_l$.  Given
$\tau_i$ satisfying~\eqref{eq:13}, while considering pitching a tent
at $(v_l, \tau_{i}(v_l))$ so that~\eqref{eq:13} continues to hold, we
want to ensure that
\[
\frac{ \left( \tau_{i}(v_l) + \kt_l\right)  - \tau_{i}(e_2 ) }{|e|}
\le \frac{ \CT}{c_{e}}
\quad\text{ and }\quad
\frac{ -\left( \tau_{i}(v_l) + \kt_l\right)  + \tau_{i}(e_2 ) }{|e|}
\le \frac{ \CT}{c_{e}}
\]
hold for all $e \in \EE_l$. The latter inequality is obvious
from~\eqref{eq:13} since we are only interested in $\kt_l\ge 0$.
The former inequality is ensured if we choose
\[
\kt_l \le
      \min_{e \in \EE_l}
      \left( \tau_{i}(e_2) - \tau_{i}(v_l)  + |e|\frac{\CT}{c_e}
      \right), 
\]
as done in the Algorithm~\ref{alg:pitchloc} below. The algorithm also
maintains a list of locations ready for pitching a tent. For this, it
needs the reference heights
$
r_l = \min_{e \in \EE_l} |e|  \CT/ c_e 
$
(the maximal tent pole heights on a flat advancing front) which can be
precomputed. Set $\kto_l = r_l$.  A vertex $v_l$ is considered a
location where ``good'' progress in time can be made if its index $l$
is in the set
\begin{equation}
  \label{eq:Ji}
  J_i = \left\{ l:\;  \kt_l \ge  \gamma r_l \right\}.
\end{equation}
Here $0 < \gamma <1$ is a parameter (usually set to $1/2$). While a
lower value of $\gamma$ identifies many vertices to progress in time
moderately, a higher value of $\gamma$ identifies fewer vertices where
time can be advanced more aggressively.

\begin{algorithm}
  \label{alg:pitchloc}
  Initially, $\tau_0 \equiv 0$, $\kto_l = r_l$ and $J_0 = \{1,2\cdots,
  N_\TT\}$. For $i \ge 1$, given $\tau_{i-1}$, $\{ \ktm_l\}$, and
  $J_{i-1}$, we choose the next tent pitching location ($\vi$) and the
  tent pole height ($k_i$), and update as follows:
  \begin{enumerate}
  \item Pick any $l_*$ in $J_{i-1}$.
  \item Set $ \vi = v_{l_*}$ and $ k_i = \ktm_{l_*}.
    $
  \item Update $\tau_i$ by~\eqref{eq:tau}. 
  \item Update $\kt_l$ for all vertices
    $v_l$ adjacent to $\vi$ by
    \[
    \kt_l = \min \left(
      \tmax - \tau_{i}(v_l),\;
      \min_{e \in \EE_l}
      \left( \tau_{i}(e_2) - \tau_{i}(v_l)  + |e|\frac{\CT}{c_e}
      \right) 
      \right).
    \]
  \item Use~$\{\kt_l\}$ to set $J_i$ using~\eqref{eq:Ji}.
  \end{enumerate}
\end{algorithm}

\begin{remark}[Parallel tent pitching] \label{rem:partents}
  To pitch multiple tents in parallel, at the $i$th step, instead of
  picking $l_*$ arbitrarily as in Algorithm~\ref{alg:pitchloc}, we
  choose $l_* \in J_{i-1}$ with the property that
  $ \om_{v_{l_*}} = \om_{\vi}$ does not intersect $\om_\vj$ for all
  $j < i$. As we step through $i$, we continue to pick such $l_*$
  until we reach an index $i=i_1$ where no such $l_*$ exists. All the
  tents made until this point, say $K_1, K_2, \ldots, K_{i_1}$ form
  the {\em layer} $L_1$. (An example of tents within such layers are
  shown in Figure~\ref{fig:tents_levels} -- in this example one of the
  corners of the domain  has a singularity.)  We then repeat this
  process to find greater indices $i_2< i_3 < \cdots$ and layers
  $L_k = \{ K_{i_{k-1}}, K_{i_{k-1}+1}, \ldots, K_{i_k} \}$ with the
  property that $\om_{\vj}$ does not intersect $\om_{\vi}$ for any
  distinct $i$ and $j$ in the range $i_{k-1} \le i,j \le i_k$.
  Computations on tents within each layer can proceed in parallel.
\end{remark}

\section{Maps}   \label{sec:maps}

In this section we discuss a mapping technique that allows us to
separate space and time discretizations within tents. 
Domains like $\om_0 \times (0,T)$ formed by a tensor product of a
spatial domain with a time interval are referred to as spacetime
cylinders. Such domains are amenable to tensor product
discretizations where the space and time discretizations neatly
separate. However, the tent $K_i$ in~\eqref{eq:Ki} is not of this
form.  Therefore, we now introduce a mapping that transforms $K_i$
one-to-one onto the spacetime cylinder
$\hK_i = \om_{\vi} \times (0, 1).$

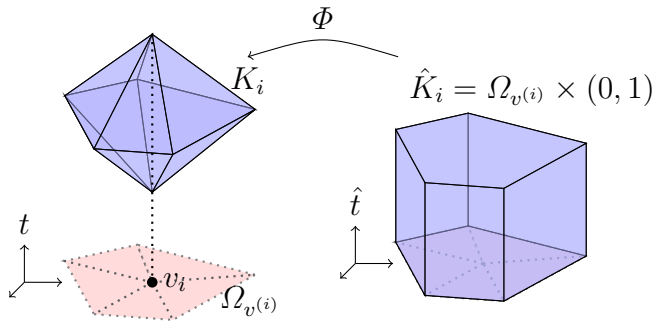
\begin{figure}
  \centering
  \begin{tikzpicture}
    \tikzstyle{blufill} = [fill=blue!30,fill opacity=0.5]
    \tikzstyle{shinyfill} = [fill=red!20, fill opacity=1]
    \tikzstyle{shinybehind}=[draw=black!70, draw opacity=0.7, fill=red!20, fill opacity=0.7]

    \coordinate (a0) at (0,0);
    \coordinate (a1) at (0,1.1);
    \coordinate (a2) at (-1.162,.283);
    \coordinate (a3) at (-.2,.5);
    \coordinate (a4) at (-.775,-.424); 
    \coordinate (a5) at (1.369,.1);
    \coordinate (a6) at (.274,-.5);
    \coordinate (b1) at (0,-1.);

    \coordinate (K) at ($(a5)+(-0.1,0.7)$);
    \node[below] at (K) {$K_i$};


    \filldraw[blufill] (a1)--(a2)--(a3) --cycle;   
    \filldraw[blufill] (a1)--(a3)--(a5) --cycle;

    \filldraw[blufill] (a2)--(a3)--(a5)--(a6)--(a4)--cycle;

    \filldraw[blufill] (b1)--(a2)--(a4) --cycle;
    \filldraw[blufill] (b1)--(a2)--(a3) --cycle;
    \filldraw[blufill] (b1)--(a5)--(a3) --cycle;

    \filldraw[blufill] (a1)--(a2)--(a4)--cycle;
    \filldraw[blufill] (a1)--(a4)--(a6)--cycle;
    \filldraw[blufill] (a1)--(a6)--(a5)--cycle;

    \filldraw[blufill] (b1)--(a6)--(a4)--cycle;
    \filldraw[blufill] (b1)--(a6)--(a5)--cycle;


    
    \draw[dotted,thick] (b1)--(a1);

    \coordinate (b0) at ($(a0)-(0,2.2)$);
    \coordinate (b2) at ($(a2)-(0,2.2)$);
    \coordinate (b3) at ($(a3)-(0,2.2)$);
    \coordinate (b4) at ($(a4)-(0,2.2)$);
    \coordinate (b5) at ($(a5)-(0,2.2)$);
    \coordinate (b6) at ($(a6)-(0,2.2)$);

    \filldraw[shinybehind,dotted,thick] (b0)--(b2)--(b4)--cycle;
    \filldraw[shinybehind,dotted,thick] (b0)--(b2)--(b3)--cycle;
    \filldraw[shinybehind,dotted,thick] (b0)--(b5)--(b3)--cycle;
    \filldraw[shinybehind,dotted,thick] (b0)--(b5)--(b6)--cycle;
    \filldraw[shinybehind,dotted,thick] (b0)--(b4)--(b6)--cycle;

    \draw[dotted,thick] (b1)--(b0);

    \coordinate (o) at ($(b0)+(-1.7,0)$);
    \draw [->] (o)-- ++(0.5,0);
    \draw [->] (o)-- ++(-0.2,-0.2);
    \draw [->] (o)-- ++(0,0.5) node[above] {$t$};

    \fill (b0) circle (2pt) node[above,right] {$v_i$}; 
    \node at ($(o)+(3,-0.25)$) {$\om_{\vi}$};


    \coordinate (a0) at ($(a0)+(4.4,-1.2)$);
    \coordinate (a2) at ($(a2)+(4.4,-1.2)$);
    \coordinate (a3) at ($(a3)+(4.4,-1.2)$);
    \coordinate (a4) at ($(a4)+(4.4,-1.2)$);
    \coordinate (a5) at ($(a5)+(4.4,-1.2)$); 
    \coordinate (a6) at ($(a6)+(4.4,-1.2)$);  
    \coordinate (a1) at ($(a1)+(4.4,-1.2)$); 
    \coordinate (b1) at ($(b1)+(4.4,-1.2)$);

    \coordinate (t0) at ($(a0)+(0,0.75)$);
    \coordinate (t2) at ($(a2)+(0,0.75)$);
    \coordinate (t3) at ($(a3)+(0,0.75)$);
    \coordinate (t4) at ($(a4)+(0,0.75)$);
    \coordinate (t5) at ($(a5)+(0,0.75)$);
    \coordinate (t6) at ($(a6)+(0,0.75)$);

    \coordinate (b0) at ($(a0)-(0,0.75)$);
    \coordinate (b2) at ($(a2)-(0,0.75)$);
    \coordinate (b3) at ($(a3)-(0,0.75)$);
    \coordinate (b4) at ($(a4)-(0,0.75)$);
    \coordinate (b5) at ($(a5)-(0,0.75)$);
    \coordinate (b6) at ($(a6)-(0,0.75)$);





    \filldraw[shinybehind,dotted,thick] (b0)--(b2)--(b4)--cycle;
    \filldraw[shinybehind,dotted,thick] (b0)--(b2)--(b3)--cycle;
    \filldraw[shinybehind,dotted,thick] (b0)--(b5)--(b3)--cycle;
    \filldraw[shinybehind,dotted,thick] (b0)--(b5)--(b6)--cycle;
    \filldraw[shinybehind,dotted,thick] (b0)--(b4)--(b6)--cycle;

    \filldraw[blufill] (b2)--(b4)--(b6)--(b5)--(b3)--cycle;

    \filldraw[blufill] (t2)--(b2)--(b3)--(t3)--cycle;
    \filldraw[blufill] (t3)--(b3)--(b5)--(t5)--cycle;
    \filldraw[blufill] (t5)--(b5)--(b6)--(t6)--cycle;
    \filldraw[blufill] (t6)--(b6)--(b4)--(t4)--cycle;
    \filldraw[blufill] (t2)--(b2)--(b4)--(t4)--cycle;

    \filldraw[blufill] (t2)--(t4)--(t6)--(t5)--(t3)--cycle;

    \coordinate (o) at ($(b0)+(-1.7,0)$);
    \draw [->] (o)-- ++(0.5,0);
    \draw [->] (o)-- ++(-0.2,-0.2);
    \draw [->] (o)-- ++(0,0.5) node[above] {$\hat t$};
    
    \coordinate (Kh) at ($(K)+(2,0)$);
    \node[anchor=north west] at (Kh) {$\hat K_i = \om_{\vi} \times (0,1)$};
    \coordinate (Kc) at ($(K)+(1,0.3)$);
    \draw[->] (Kh) .. controls (Kc) ..(K) node[midway,above] {$\vPhi$};    
  \end{tikzpicture}
  \caption{Tent mapped from a tensor product domain.\label{fig:map}}
\end{figure}

Define the mapping $\vPhi: \hK_i \to  K_i$ (see Figure~\ref{fig:map})  by 
$
\vPhi(\hat x, \hat t) 
= (\hat x, \vphi(\hat x,\hat t)),$ where
$\vphi(\hat x,\hat t) =    (1-\hat t )  \tau_{i-1}(\hat x) + \hat t \tau_i(\hat x), 
$
for all $ (\hat x, \hat t)$ in $\hK_i$. Note that the $(N+1) \times
(N+1)$ Jacobian matrix of derivatives of $\vPhi$ takes the form
\begin{equation}
  \label{eq:DPhi}
\hD \vPhi = 
\begin{bmatrix}
  I  & 0 
  \\
  \hat D \varphi & \delta
\end{bmatrix}
\end{equation}
where 
$
\hat D \varphi = 
[\gradh \vphi]^t
= 
[ 
  \hat{\partial}_1\vphi\;\,
  \hat{\partial}_2\vphi\; \cdots \;
  \hat{\partial}_N\vphi
],$
and $
\delta = \tau_i - \tau_{i-1}.
$
Here and throughout, we use abbreviated notation for derivates 
$
\hat {\partial}_j ={\partial} /{\partial} \hat x_j,$
$\hat {\partial}_t = {\partial}/ {\partial} \hat t = \hat {\partial}_{N+1}
$
that also serves to distinguish differentiation on $\hat K_i$ from differentiation
(${\partial}_i$) on $K_i$. Define 
\begin{subequations}
  \label{eq:hatfgb}
\begin{align}
  \hat f( \hat x, \hat t, w ) & = f ( \vPhi(\hat x, \hat t), w),
  & 
  \hat g  (\hat x, \hat t, w) & = g ( \vPhi(\hat x, \hat t), w ),
  \\
  \hat b ( \hat x, \hat t, w ) & = b ( \vPhi(\hat x, \hat t), w),
  & 
  \hat G(\hat x, \hat t, w)  & 
                = \hat g( \hat x, \hat t, w) 
    - \hat f( \hat x, \hat t, w) \gradh \varphi 
  \\
  \hat u  & = u\circ \vPhi,
  & 
  \hat U( \hat x, \hat t) & = \hat G( \hat x, \hat t, \hat u(\hat x, \hat t)).
\end{align}
\end{subequations}
The last equation, showing that the function
$\hat u : \hat K_i \to \RRR^L$ is mapped to
$\hat U : \hat K_i \to \RRR^L$ by $\hat G$, will often be abbreviated
as simply $ \hat U = \hat G (\hat u)$.

\begin{theorem}
  \label{thm:map}
  The function $u$ satisfies~\eqref{eq:1} in $K_i$ if and only if 
  $\hat u $ and $\hat U$
  satisfy 
  \[
  \hat {\partial}_t \hat U
  + 
  \divh \big( \delta \hat f \,\big) 
  + 
  \delta \, \hat b = 0
    \qquad \text{ in } \hat K_i,
  \]
  which in expanded component form reads as 
  \begin{equation}
    \label{eq:mapped}
      \hat {\partial}_t
      [\hat G( \hat u)]_l+
  \sum_{j=1}^N \hat {\partial}_j  \left( 
    \delta(\hat x) 
    \hat f_{lj} (\hat x, \hat t, \hat u(\hat x, \hat t)) 
  \right)
  + 
  {\partial}elta(\hat x) \hat b_l (\hat x, \hat t, \hat u( \hat x, \hat t)) =0
  \end{equation}
  for all $(\hat x, \hat t) $ in $\hat K_i$ and all $l =1, \ldots, L.$
\end{theorem}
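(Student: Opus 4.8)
The plan is to exploit the fact that $\vPhi$ is a bijection of $\hat K_i$ onto $K_i$ (as already noted before the statement) to reduce the claimed equivalence to a single pointwise identity, and then verify that identity by the chain rule. Since the block lower-triangular form~\eqref{eq:DPhi} gives $\det \hD\vPhi = \delta$, and $\delta = \tau_i-\tau_{i-1}$ is $>0$ on the interior of $\hat K_i$, the function $u$ solves~\eqref{eq:1} on $K_i$ if and only if the composition of~\eqref{eq:1} with $\vPhi$, multiplied by the positive factor $\delta$, holds on $\hat K_i$. Because $\tau_{i-1},\tau_i\in P_1(\TT)$, the map $\vphi$ is affine in $\hat x$ on each spacetime piece $\hat T\times(0,1)$, $\hat T\in\TT$, so I would carry out all differentiations on one such piece, where $\vphi$ is smooth and mixed partials commute, and then assemble.

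First I would record the chain rule for the pullback $\hat W = W\circ\vPhi$ of a scalar field $W$ on $K_i$. Since $\vPhi(\hat x,\hat t)=(\hat x,\vphi(\hat x,\hat t))$ and $\hat{\partial}_t\vphi=\delta$, differentiation gives
\[
\hat{\partial}_t \hat W = \delta\,(\partial_t W)\circ\vPhi, \qquad \hat{\partial}_j \hat W = (\partial_j W)\circ\vPhi + (\hat{\partial}_j\vphi)\,(\partial_t W)\circ\vPhi .
\]
Applying these with $W$ equal to the composite fields $g_l(\cdot,u)$ and $f_{lj}(\cdot,u)$ (whose pullbacks are $\hat g_l(\cdot,\hat u)$ and $\hat f_{lj}(\cdot,\hat u)$ by~\eqref{eq:hatfgb}), solving for the unhatted derivatives, substituting into~\eqref{eq:1}, and multiplying by $\delta$ yields, for each $l$,
\[
\hat{\partial}_t\bigl[\hat g_l(\cdot,\hat u)\bigr] + \sum_{j=1}^N\Bigl( \delta\,\hat{\partial}_j\bigl[\hat f_{lj}(\cdot,\hat u)\bigr] - (\hat{\partial}_j\vphi)\,\hat{\partial}_t\bigl[\hat f_{lj}(\cdot,\hat u)\bigr]\Bigr) + \delta\,\hat b_l(\cdot,\hat u) = 0 .
\]

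Separately I would expand the target~\eqref{eq:mapped}. Writing $[\hat G(\hat u)]_l = \hat g_l(\cdot,\hat u) - \sum_j \hat f_{lj}(\cdot,\hat u)\,\hat{\partial}_j\vphi$ from~\eqref{eq:hatfgb} and applying the product rule to $\hat{\partial}_t[\hat G(\hat u)]_l$ and to $\hat{\partial}_j(\delta\,\hat f_{lj}(\cdot,\hat u))$, a termwise comparison with the displayed pulled-back equation shows that every term cancels except for the single leftover
\[
\sum_{j=1}^N \hat f_{lj}(\cdot,\hat u)\,\bigl(\hat{\partial}_j\delta - \hat{\partial}_t\hat{\partial}_j\vphi\bigr).
\]
The \emph{crux} of the proof, and the one step that is not pure bookkeeping, is that this leftover vanishes: since $\delta=\hat{\partial}_t\vphi$, equality of mixed partials gives $\hat{\partial}_j\delta = \hat{\partial}_j\hat{\partial}_t\vphi = \hat{\partial}_t\hat{\partial}_j\vphi$ on each piece, so the two equations coincide and the asserted equivalence follows.

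Finally, I would remark that this computation is a spacetime Piola transform in disguise: using $\det \hD\vPhi = \delta$ together with the explicit inverse of~\eqref{eq:DPhi}, the pair consisting of the spatial flux $\delta\hat f$ and the temporal component $\hat U$ is exactly $\det(\hD\vPhi)\,(\hD\vPhi)^{-1}$ applied to the $\vPhi$-pullback of the spacetime flux $(f,g)$ of~\eqref{eq:1}, so the identity above is an instance of the standard contravariant Piola identity $\hat{\partial}_t\hat U + \divh(\delta\hat f) = \delta\,(\partial_t g + \divx f)\circ\vPhi$. The main obstacle I anticipate is purely organizational: keeping the chain-rule bookkeeping straight so that the cancellation reduces cleanly to the commuting-mixed-partials identity, together with the minor regularity caveat that the differentiations must be read piecewise because $\tau_{i-1},\tau_i$ are only piecewise affine.
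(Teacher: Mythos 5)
Your proposal is correct and takes essentially the same approach as the paper: the paper packages $(f_{l1},\ldots,f_{lN},g_l)$ into a spacetime flux $F_l$, forms the Piola pullback $\hat F_l = \det[\hD\vPhi]\,[\hD\vPhi]^{-1}(F_l\circ\vPhi)$, and invokes the identity $\hatdiv \hat F_l = \det[\hD\vPhi]\,(\dive F_l)\circ\vPhi$ before identifying components via the block-triangular inverse of~\eqref{eq:DPhi} --- precisely the structure you name in your closing remark. Your chain-rule bookkeeping, with the cancellation reduced to $\hat{\partial}_j\delta = \hat{\partial}_t\hat{\partial}_j\varphi$, is exactly the ``direct computation'' of that Piola identity which the paper itself offers parenthetically as an alternative to quoting it, so the two arguments coincide in substance.
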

\begin{proof}
  The proof proceeds by calculating the 
  pull back of the system~\eqref{eq:1} from
  $K_i$ to $\hat K_i$ using the map $\vPhi$. 
  Using the given $u$, define $F_l : K_i \to
  \RRR^{N+1}$ and $B: K_i \to \RRR^{L}$ by
\[
F_l (x,t) = 
\begin{bmatrix}
  f_{l1}( x, t, u(x,t)) \\
  \vdots \\
  f_{lN}( x, t, u(x,t)) \\
  g_{l}( x,t, u(x,t))
\end{bmatrix},
\quad 
B(x,t) = 
\begin{bmatrix}
  b_1( x,t,  u(x,t))\\
  \vdots \\
  b_{L}( x,t, u(x,t))
\end{bmatrix}
\]
and define their pullbacks on $\hK_i$ by 
$
  \hat F_l  = {\partial}et[\hD\vPhi]\, [\hD\vPhi]^{-1} (F_l \circ \vPhi)
$ and $
  \hat B  = {\partial}et[\hD\vPhi]\, (B \circ \vPhi).
$
By the well-known properties of the Piola map (or by direct
computation),
\begin{equation}
  \label{eq:4}
  \hatdiv{\hat F} = 
  {\partial}et[\hD\vPhi]\, 
  ({\partial}ive F) \circ \vPhi,
\end{equation}
where the divergence on either side is now taken in spacetime
($\RRR^{N+1}$).  Note that $\det [\hD\vPhi] = \delta $ is never zero
at any point of (the open set) $\hat K_i$.  Writing
equation~\eqref{eq:1} in these new notations, we obtain
$ (\dive F_l)(x,t) + B(x,t) = 0$ for all $(x,t) \in K_i,$ or
equivalently,
\begin{align*}
  (\dive F_l)(\vPhi(\hat x, \hat t)) + B(\vPhi(\hat x, \hat t)) 
  & = 
    0
\end{align*}
for all $(\hat x, \hat t) \in \hat K_i.$
Multiplying through by
$\det[\hD\vPhi]$ and using~\eqref{eq:4}, this becomes
\begin{equation}
  \label{eq:3}
  \hatdiv \hat F_l + \hat B = 0, 
  \qquad \text{ on } \hat K_i.
\end{equation}

To finish the proof, we simplify this equation.
Inverting the block triangular matrix $\hD \vPhi$ displayed
in~\eqref{eq:DPhi} and using it in the definition for $\hat F_l$, we
obtain
\[
\hat F_l = \det[\hD\vPhi] 
\begin{bmatrix}
  I & 0 
  \\
  -\delta^{-1}
  \hat D \varphi 
  & 
  \delta^{-1}
\end{bmatrix} 
F_l \circ \vPhi
= 
\begin{bmatrix}
   \delta  \hat f_l 
  \\
  \hat g_l
  - \gradh \varphi \cdot \hat f_l 
\end{bmatrix}
\]
where $\hat f_l$ is the vector whose $i$th component is
$\hat f_{li}(\hat x, \hat t, \hat u)$ and $\hat g_l$ denotes the $l$th
component of $\hat g(\hat x, \hat t, \hat u)$. Substituting these
into~\eqref{eq:3} and expanding, we obtain~\eqref{eq:mapped}.
\end{proof}

\section{Two approaches to MTP schemes}  \label{sec:twoapproaches}

Theorem~\ref{thm:map} maps the hyperbolic system to the cylinder which
is a tensor product of a spatial domain $\om_{\vi}$ with a time
interval $(0,1)$. This opens up the possibility to construct tensor
product discretizations -- rather than spacetime discretizations --
within each tent.

We denote by $\TT_i$ the spatial mesh of
$\om_{\vi}$ consisting of elements of $\TT$ having $\vi$ as a vertex.
For the spatial discretization, we use a finite element space~$X_i$
based on the mesh $\TT_i$. In order to discretize~\eqref{eq:mapped},
we multiply it with a spatial test function $v$ in $X_i$, integrate
over the vertex patch $\om_{\vi}$, and 
manipulate the terms to get an equation of the form 
\begin{equation}
  \label{eq:14}
\int_{\om_{\vi}}\hat {\partial}_t \hat U( \hat x, \hat t) \cdot v (\hat x) \; d\hat x  
= S_i( \hat t, \hat u, v),   
\end{equation}
for all $\hat t \in (0,1)$ and $v \in X_i$. Details of the spatial
discretization, yet unspecified, are lumped into $S_i$.  
Note that the temporal
derivative occurs only in the first term and can be discretized using
Runge-Kutta or other schemes.  Emphasizing the point that spatial
discretization is thus separated from temporal discretization, 
we continue, leaving time  undiscretized,
to discuss two semidiscrete approaches.

\subsection{First approach}  \label{ssec:first}

Recalling that $\hat U$ depends on $\hat u,$ the first approach 
discretizes $\hat u(\cdot, \hat t)$ in $X_i$.
Let the functions $\psi_{n} : \om_{\vi} \to \RRR^L$, for
${n} = 1,\ldots, P,$ form a basis of $X_i$. We seek an approximation to
$\hat u$ of the form
$
\hat u_h(\hat x, \hat t) = \sum_{{n}=1}^P 
\uu_{n}(\hat t) \,\psi_{n}(\hat x)
$
where $\uu (\hat t)$, 
the vector whose ${n}$th entry is $\uu_{n}(\hat t)$,
is to be found. Substituting this into~\eqref{eq:14} and
using~\eqref{eq:hatfgb}, we obtain
$\int_{\om_{\vi}}\hat {\partial}_t \hat G( 
\hat u_h) \cdot v  \; d\hat x  
= S_i( \hat t, \hat u_h, v),   
$
for all $v \in X_i$ and $\hat t $ in $(0,1)$. To view this as a
finite-dimensional system of ordinary differential equations (ODEs),
define two maps $\mG$ and $\mS$ on $\RRR^P$ by
\[
[\mG(\ww)]_{m} = 
\int_{\om_{\vi}} \hat G
\left( 
\sum_{{n}=1}^P \ww_{n} \psi_{n}(\hat x) \right) \psi_m(\hat x)\, d\hat x, 
\quad
[\mS(\ww)]_m = S_i\bigg(\hat t, 
\sum_{{n}=1}^P \ww_{n} \psi_{n} , \;\psi_m\bigg).
\]
Then, putting $v = \psi_{n}$, 
we obtain the
semidiscrete problem of finding a $\uu: (0,1) \to \RRR^P$, given
initial values $\uu(0)$, satisfying the ODE system
\begin{equation}
  \label{eq:appr-1}
\frac{d}{d \hat t} \mG( \uu(\hat t) ) = 
\mS (\uu(\hat t)), \qquad 0 < \hat t < 1.  
\end{equation}

\subsection{Second approach}  \label{ssec:second-approach}

The  second approach discretizes $\hat U$ rather than $\hat u$,
assuming that~$\hat G^{-1}$ is at hand. 
We substitute $ \hat u = \hat G^{-1}( \hat U)$ into the right
hand side of~\eqref{eq:14} and obtain the following semidiscrete
problem. Find $\hat U_h$ of the form
\begin{equation}
  \label{eq:secondapp}
\hat U_h(\hat x, \hat t) = \sum_{{n}=1}^P \mU_{n}(\hat t) \psi_{n}(\hat x)  
\end{equation}
that satisfies 
$
\int_{\om_{\vi}}\hat {\partial}_t \hat U_h\cdot v  \; d\hat x  
= S_i( \hat t, \hat G^{-1}(\hat U_h), v), 
$
for all $v \in X_i$ and $\hat t $ in $(0,1)$. With 
\[
\mM_{m{n}} = 
\int_{\om_{\vi}} 
\psi_{n}(\hat x) \psi_m(\hat x)\, d\hat x, 
\quad
[\mR(\ww)]_m = S_i \left( \hat t, 
\hat G^{-1} \left( \sum_{{n}=1}^P \ww_{n} \psi_{n}\right) , \psi_m\right).
\] 
we obtain the following ODE system for $\mU$, the vector
of coefficients $\mU_{n}(t)$.
\begin{equation}
  \label{eq:appr-2}
  \frac{d}{d \hat t} \mM  \mU(\hat t) = 
  \mR (\mU(\hat t)), \qquad 0 < \hat t < 1.
\end{equation}
Comparing with~\eqref{eq:appr-1}, instead of 
a possibly nonlinear $\mG$,
we now have a linear 
 action of the mass matrix $\mM$ in
$\RRR^{P\times P}.$ 


\subsection{Examples}

We first illustrate how to treat 
a very general linear hyperbolic system using the
first approach. In the second example we illustrate the 
second approach using   a simple nonlinear
conservation law.

\begin{example}[Linear hyperbolic systems] \label{eg:linear} Suppose
  that $\Aj: \om_0 \to \RRR^{L \times L}$, for $j=1,\ldots, N$, are 
  symmetric matrix-valued functions and 
  $B: \om \to \RRR^{L \times L}$ is bounded.  In addition, suppose
  $\At \equiv A^{(N+1)}$ is a symmetric positive definite
  matrix-valued function from $\om_0$ to $\RRR^{L \times L}$. 
  A large class of
  linear examples can be obtained by setting
\begin{align}  \label{eq:linear}
[f(x,t,u)]_{lj} & = \sum_{m=1}^L \Aj_{lm}(x) u_m,
& 
[g(x,t,u)]_{l} & =  \sum_{m=1}^L \At_{lm}(x) u_m.
\end{align}    
and $b(x,t,u)  = B(x,t) u.$
Then~\eqref{eq:1} can be written as 
\begin{equation}
  \label{eq:5}
  {\partial}_t (\At u) + \sum_{j=1}^N {\partial}_j( \Aj u) + B u = 0.
\end{equation}

A simple equation that fits into this example is the scalar {\em
  transport equation}. The  transport of a scalar density $u$
along a given divergence-free vector field $\beta : \om_0 \to \RRR^N$
is described by
$  {\partial}_t u + \dive ( \beta u) =0.
$
This fits in the setting of~\eqref{eq:5} with $L=1$, $B=0$, $ \At =
[1],$ and $ A^{(j)}(x) = [\beta_j(x)],$ for $j=1,2,\ldots,N$.  
A more complex system that also fits into this example is {\em
  electromagnetic wave propagation}.  Given positive functions
$\veps$, $\mu$ and $\sigma$ on $\om_0$, the Maxwell system for
electric field $E$ and magnetic field $H$ consists of
$    \veps {\partial}_t E - \curl H  + \sigma E  = 0
$ and $
    \mu {\partial}_t H + \curl E  =0.$
This system also fits into~\eqref{eq:5} with 
$  N=3,$ $ L=6,$ and 
$ u =
[\begin{smallmatrix}
    E \\ H 
  \end{smallmatrix}]
$
and 
\begin{align*}
  \Aj =
  \begin{bmatrix}
    0 & [\epsilon^j] \\
    [\epsilon^j]^t & 0 
  \end{bmatrix},\quad
                     \At =
                     \begin{bmatrix}
                       \veps I &  0 \\
                       0       & \mu I 
                     \end{bmatrix}, \quad
                                 B =
                                 \begin{bmatrix}
                                   \sigma & 0 \\
                                   0 & 0
                                 \end{bmatrix}
\end{align*}
where $\epsilon^j$ is the matrix whose $(l,m)$th entry is the
alternator $\epsilon_{jlm}.$

To solve~\eqref{eq:5} 
for $u$ on a spacetime tent $K_i$,
  we first map~\eqref{eq:5} to the 
  spacetime cylinder $\hat K_i$ using Theorem~\ref{thm:map}. We
  find that the map $\hat u \to \hat U$ is now given by
  $
  \hat U = \hat G(\hat u) = 
  H(\hat x, \hat t) \hat u
  $
  where $H : \hat K_i \to \RRR^{L \times L}$ is the matrix function
  \begin{equation}
    \label{eq:H}
    H = 
    \At - 
    \sum_{j=1}^N\bigg[ (1-\hat t) \hat \partial_j\tau_{i-1} + \hat t\hat{\partial}_j \tau_i
    \bigg] \Aj.
  \end{equation}
  Following the first approach, we  discretize the term
  $\hat {\partial}_t ( H \hat u )$ in that form.
  The semidiscretization~\eqref{eq:appr-1} now takes the form
  \begin{equation}
    \label{eq:H-approach-1}
    \frac{d}{d \hat t} \left( \mH(\hat t)  \uu(\hat t)  \right)= 
    \mS (\uu(\hat t)), \qquad 0 < \hat t < 1,
  \end{equation}
  where $\mH$ is the matrix whose entries are
  $\displaystyle{
    \mH_{m{n}}(\hat t) = 
    \int_{\om_{\vi}} 
    H(\hat x, \hat t)\, \psi_{n} (\hat x) \cdot \psi_m(\hat x)
    \, d\hat x.
  }$
\end{example}

\begin{example}[2D inviscid scalar Burgers equation] \label{eg:burger}
  A simple two-dimensional analogue of the well-known one-dimensional
  inviscid Burgers equation is the following scalar conservation law
  considered in~\cite{JiangTadmo98}. In the framework leading
  to~\eqref{eq:1}, set
  $
  L=1,$ $N=2,$ $ 
  g(x,t,u) = u,$ 
  $f(x,t, u ) = \frac 1 2 u^2  \begin{bmatrix} 1 & 1\end{bmatrix},
  $ and 
  $b \equiv 0  $
  to get
  ${\partial}_t u + \frac 1 2 \left(  {\partial}_1 (u^2)  +  {\partial}_2 (u^2) \right) = 0$.
  Applying Theorem~\ref{thm:map} to map this equation from a tent
  $K_i$ to the spacetime cylinder $\hat K_i$, we find that 
  $
  \hat U = \hat G( \hat u) =  \hat u - 
  \frac 1 2 \hat u^2 
  ( \hat {\partial}_1 \varphi + \hat {\partial}_2 \varphi).
  $
  To illustrate how to use the second approach, we
  compute $ \hat G^{-1}(\hat U)$ by solving the
  quadratic equation 
  $
  d \hat u^2 - 2 \hat u + 2 \hat U =0
  $
  where $d = \hat {\partial}_1 \varphi + \hat {\partial}_2 \varphi.$  The roots are 
  $ \hat u = ( 1 \pm \sqrt{ 1 - 2 d \hat U} )/d$.
  In order to choose between the two roots, we now assume that the
  tents are constructed so that
  \begin{equation}
    \label{eq:12}
    | \hat u d |  < 1
  \end{equation}
  throughout $\hat K_i$.  Note that since $\hat u$ is the wave speed and
  $d$ is related to the tent pole height, this is the causality constraint.
  Note also that~\eqref{eq:12} implies that $\hat u d -1 \ne 0$,
  a necessary condition for the mapped system to be hyperbolic in the
  $\hat t$-direction -- cf.~\eqref{eq:hyperdet}. 

  Now, since~\eqref{eq:12} implies that
  $ \hat u d -1 \le | \hat u d | -1 < 0$, the only root that makes
  sense is the one satisfying
  $ \hat u d - 1 = - \sqrt{ 1 - 2 d \hat U} < 0$. Simplifying this
  root, we obtain
  \[
  \hat G^{-1}( \hat U ) = \frac{2 \hat U}{ 1 + \sqrt{ 1 - 2 d \hat U}}.
  \]
  One can now proceed with the second
  approach by applying a standard spatial discontinuous Galerkin
  discretization and time stepping by a Runge-Kutta scheme. Some
  regularization or slope limiting technique is needed to avoid
  spurious oscillations near sharp solution transitions. This issue is
  considered further in Section~\ref{sec:euler}.
\end{example}

\section{A locally implicit MTP scheme for the wave equation}  
\label{sec:wave}

\subsection{The acoustic wave problem}

Suppose we are given a material coefficient
$\alpha: \om_0 \to \RRR^{N \times N},$ symmetric and positive definite 
everywhere
in~$\om_0$ and a damping coefficient $\beta: \om_0 \to \RRR$. The
wave equation for the linearized pressure $\phi: \om \to \RRR$ is
\begin{subequations} \label{eq:waveIBVP}
\begin{equation}
  \label{eq:7}
  {\partial}_{tt} \phi + \beta {\partial}_t \phi - \divx (\alpha\gradx \phi) = 0 
  \qquad \text{in } \om.
\end{equation}
While a variety of  initial and boundary conditions 
are admissible in MTP schemes,
for definiteness, we focus on these model conditions:
\begin{align}
  \label{eq:16}
  n_x \cdot \alpha \gradx \phi & = 0  
  &&  \text{on } {\partial} \om_0 \times (0, \tmax),\\
  {\partial}_t\phi = \phi_1 \text{ and } 
  \phi & = \phi_0 &&  \text{on } \om_0 \times \{0\}.
\end{align}
\end{subequations} 
for some given sufficiently smooth compatible data $\phi_0$ and
$\phi_1$. In~\eqref{eq:16}, $n_x$ denotes the spatial component of the
outward unit normal.

Let us put~\eqref{eq:waveIBVP} into the
framework of~\eqref{eq:1} using Example~\ref{eg:linear}.  Set $L=N+1$ and
\[
u = 
\begin{bmatrix}
  q \\ \mu 
\end{bmatrix}
= 
\begin{bmatrix}
  \alpha \gradx \phi 
  \\ 
  {\partial}_t \phi
\end{bmatrix} 
\in \RRR^L.
\]
Then~\eqref{eq:7} yields 
$  \alpha^{-1} {\partial}_t q - \gradx \mu =0$ and 
$  {\partial}_t \mu - \dive q - \beta \mu = 0$.
This is readily identified to be in the form~\eqref{eq:5} with 
\[
\At =
\begin{bmatrix}
  \alpha^{-1} & 0 \\
  0 & 1 
\end{bmatrix}, \quad 
\Aj = -
\begin{bmatrix}
  0 & e_j \\
  e_j^t & 0 
\end{bmatrix}, \quad 
B = 
\begin{bmatrix}
  0 & 0 \\
  0 & \beta 
\end{bmatrix},
\]
where $e_j$ denotes the $j$th unit (column) vector.  The boundary
condition in the new variable is $ n_x \cdot q = 0$ on
${\partial}\om_0 \times (0,\tmax),$ and the initial conditions take the form
$ q = \alpha \gradx \phi_0$ and $ \mu = \phi_1$ on $\om_0.$

To describe the MTP scheme,  set
$u_0 = (q_0,\mu_0) = (\alpha \gradx \phi_0, \phi_1)$. 
Suppose we are at the $i$th tent pitching step. Then the 
solution $u_{i-1} = (q_{i-1}, \mu_{i-1})$ has been computed on the
advancing front $S_{i-1}$, and a new tent $K_i$ has been erected at
mesh vertex~$\vi$.  We now need the wave equation mapped over to
$\hat K_i = \om_{\vi} \times (0,1)$.  From 
Example~\ref{eg:linear}, 
\begin{equation}
  \label{eq:8}
\frac{{\partial}}{{\partial} \hat t} ( H \hat u )
+ 
\sum_{j=1}^N \frac{{\partial}}{{\partial} \hat x_j}( \delta \Aj \hat u ) 
+ \delta \hat B \hat u =0,
\end{equation}
where $H$ is as in~\eqref{eq:H} and $\hat B = B \circ \vPhi$ has the
sole nonzero entry $\hat \beta = \beta \circ \vPhi$.  In this example,
it is convenient to split $\hat u$ into two blocks consisting of
$\hat q = q \circ \vPhi \in \RRR^N$ and
$\hat\mu = \mu \circ \vPhi \in \RRR.$ Then for all
$(\hat x, \hat t ) \in \hat K_i$,
\begin{equation}
  \label{eq:H-wave}
H(\hat x, \hat t)
\begin{bmatrix}
  \hat q \\ \hat \mu 
\end{bmatrix}
= 
\begin{bmatrix}
  \hat\alpha^{-1}  & \gradx \varphi \\
  (\gradx \varphi)^t & 1
\end{bmatrix}
\begin{bmatrix}
  \hat q \\ \hat\mu 
\end{bmatrix}
\end{equation}
where $\hat \alpha = \alpha \circ \vPhi$ and~\eqref{eq:8} can be
rewritten as
\begin{equation}
  \label{eq:10}
\frac{{\partial}}{{\partial} \hat t} 
\begin{bmatrix}
  \hat{\alpha}^{-1} \hat q + \hat\mu \gradx \varphi 
  \\
  \hat\mu + \hat q \cdot \gradx \varphi 
\end{bmatrix}
- 
\begin{bmatrix}
  \gradx ( \delta \hat \mu)  
  \\
  \dive( \delta \hat q)
\end{bmatrix}
+ 
\begin{bmatrix}
  0 \\ 
  \delta \hat\beta \hat \mu
\end{bmatrix}
=0 \qquad
\text{ in } \om_{\vi} \times (0,1).
\end{equation}
On the cylinder, this equation must be supplemented by the initial
conditions $ \hat q = \hat{q}_{i-1}$ and $\hat \mu = \hat{\mu}_{i-1}$
on $\om_{\vi} \times \{0\}.$

\subsection{Semidiscretization after mapping}

For the spatial discretization, we use the Brezzi-Douglas-Marini (BDM)
mixed method. Namely, letting $P_p(T)$ denote the space of polynomials
of degree at most~$p$ in $\hat x$, restricted to a spatial
$N$-simplex~$T$, set
$X_i = \{ (r,\eta) \in \Hdiv{\om_{\vi}} \times L^2(\om_{\vi}) : r|_T
\in P_p(T)^N$
and $ \eta|_T \in P_p(T)$ for all simplices $T \in \TT_i$ and
$r \cdot n_x =0$ on ${\partial} \om_{\vi} \cap {\partial}\om_0\}.$
Multiplying~\eqref{eq:10} by $(r, \eta)$ and integrating the first
equation by parts, we obtain
\begin{align} \label{eq:19} 
  \frac{d}{d \hat t} 
  \int_{\om_\vi}
  \begin{bmatrix}
    \hat{\alpha}^{-1} \hat q + \hat\mu \gradx \varphi 
    \\
    \hat\mu + \hat q \cdot \gradx \varphi 
  \end{bmatrix}
  \cdot 
  \begin{bmatrix}
    r \\ \eta 
  \end{bmatrix}
  \, d \hat x 
  =
  \int_{\om_{\vi}}
  \begin{bmatrix}
  -\delta \hat \mu
  \\
  \dive( \delta \hat q)
  - \delta \hat\beta \hat \mu
  \end{bmatrix}
  \cdot 
  \begin{bmatrix}
  \dive r
  \\
  \eta 
  \end{bmatrix}
  \, d \hat x,
\end{align}
for all $(r, \eta) \in X_i$.  Using a basis
$\psi_m \equiv (r_m,\eta_m)$ of $X_i$, the 
coefficients $ \uu_m(\hat t)$ of
the expansion of $\hat u$ in this basis satisfy an ODE system, which
can be written using matrices $\mH$ and $\mS$ defined by
\begin{subequations}
  \label{eq:wave-semidiscrete}
\begin{align}
  \label{eq:wave-semidiscrete-H}
  \mH_{lm}(\hat t) 
  & = 
  \int_{\om_\vi}
  \begin{bmatrix}
    \hat{\alpha}^{-1} r_m + \eta_m \gradx \varphi 
    \\
    \eta_m +  r_m \cdot \gradx \varphi 
  \end{bmatrix}
  \cdot 
  \begin{bmatrix}
    r_l \\ \eta_l
  \end{bmatrix}
  \, d \hat x
  \\   
  \label{eq:wave-semidiscrete-S}
  \mS_{lm}
  & = 
  \int_{\om_{\vi}}
  \begin{bmatrix}
  -\delta \eta_m
  \\
  \dive( \delta r_m)
  - \delta \hat\beta \eta_m
  \end{bmatrix}
  \cdot 
  \begin{bmatrix}
  \dive r_l
  \\
  \eta_l
  \end{bmatrix}
  \, d \hat x.
\end{align}
Using prime $(')$ to abbreviate $d/ d\hat t$, observe that~\eqref{eq:19}
is the same as
\begin{equation}
\label{eq:wave-semidiscrete-ODE}
\left( \mH(\hat t)  \uu(\hat t) \right)'  = 
\mS \uu(\hat t), \qquad 0 < \hat t < 1,
\end{equation}
\end{subequations}
a realization of~\eqref{eq:H-approach-1} for the wave equation.

\subsection{Time discretization after mapping}

We utilize the first approach of \S\ref{ssec:first}. 
by applying an implicit high order multi-stage Runge-Kutta
(RK) method of Radau IIA type \cite[Chapter~IV.5]{HaireWanne91} for
time stepping~\eqref{eq:wave-semidiscrete-ODE}.  Note that due to the
implicit nature of the scheme, there is no CFL constraint on the
number of stages (within the mapped tent), irrespective of the spatial
polynomial degree~$p$ of $X_i$.
These RK methods, with $s$ stages, are characterized by numbers
$a_{lm}$ and $c_l$ for $l,m=1,\ldots, s$ (forming entries of a Butcher
tableau) with the property that $c_s =1$ (and the remaining $c_l$ are
determined by the roots of appropriate Jacobi polynomials).  When
applied to a standard ODE $y' = f(\hat t, y)$ in the interval
$\hat t \in (0,1)$, with initial condition $y(0)=y_0$, it produces
approximations $y_l$ to $y$ at $t_l=c_l$ that satisfy
\begin{equation}
  \label{eq:11}
y_l = y_0 +  \sum_{m=1}^s a_{lm} f(t_m,y_m), \qquad l =1,\ldots, s.  
\end{equation}
However, since~\eqref{eq:wave-semidiscrete-ODE} is not in this
standard form, we substitute $y_l = \mH_l \uu_l$ into~\eqref{eq:11},
where $\mH_l=\mH(\hat t_l)$ and $\uu_l$ is the approximation to
$\uu( t_l)$ to be found. Also setting $f(t_m,y_m) = \mS \uu_m$, we
obtain the linear system
\[
\mH_l \uu_l = \mH_0 \uu_0 + 
\sum_{m=1}^s a_{lm} \mS \uu_m, \qquad l =1,\ldots, s,
\]
which can be easily solved for the final stage solution~$\uu_s$, given
$\uu_0$.

\subsection{Numerical studies in two and three space dimensions}

The  locally implicit MTP method was implemented 
within the framework of 
 the NGSolve~\cite{Schob14} package. We report the results 
obtained for~\eqref{eq:waveIBVP} with $\beta=0,$
$\alpha=1$,  $\om_0$ set to the unit square, $\phi_0=0$ and
$\phi_1 = \cos(\pi x_1) \cos(\pi x_2)$ for $(x_1, x_2) \in \om_0$.  It
is easy to see that the exact solution is the classical standing wave
$\phi(x,t) = \cos(\pi x_1) \cos (\pi x_2) \sin( \pi t \sqrt{2})/
(\sqrt{2} \pi),$ 
\[
u(x,t) = 
\begin{bmatrix}
  q(x,t) \\ \mu(x,t)
\end{bmatrix} 
=
\begin{bmatrix}
  \gradx \phi \\
  {\partial}_t \phi
\end{bmatrix}
= 
\begin{bmatrix}
  -\sin (\pi x_1) \cos( \pi x_2) \sin( \pi t \sqrt{2})/ \sqrt{2} \\
  -\cos (\pi x_1) \sin( \pi x_2) \sin( \pi t \sqrt{2})/ \sqrt{2} \\
  \cos(\pi x_1) \cos(\pi x_2) \cos( \pi t \sqrt{2}) 
\end{bmatrix}.
\]
The spatial domain $\om_0$ is meshed by a uniform grid obtained by
dividing the unit square into $2^l \times 2^l$ congruent squares and
dividing each square into two triangles by connecting its positively
sloped diagonal. The parameters to be varied in each experiment are
the spatial mesh size~$h=2^{-l}$ and the the polynomial degree $p$ of
the space discretization. The number of Runge-Kutta time stages is
fixed to $s=p$.  The tent meshing algorithm is driven by an input
wavespeed of 2 (leading to conservative tent pole heights) to mesh a
time slab of size $2^{-l}/8$. This time slab is stacked in time to
mesh the entire spacetime region of simulation $\om_0 \times (0,1)$.
Letting $q_h(x)$ and $\mu_h(x)$ denote the computed solutions at time
$t=1$, we measure the error norm $e$ defined by
$ e^2 = \| q(\cdot, 1) - q_h \|^2_{L^2(\om_0)} + \| \mu(\cdot,1) -
\mu_h \|_{L^2(\om_0)}^2.$ The observations are compiled in
Figure~\ref{fig:errp}, where the values of $e$ as a function of degree
$p$ and $h$ are plotted.  The rate $r$ of the $O(h^r)$-convergence
observed is computed from the slope of the regression lines and marked
near each convergence curve.  We observe that $e$ appears to go to $0$
at a rate of $O(h^p).$

\begin{figure}  
  \centering
  \pgfplotsset{yticklabel style={text width=2.5em,align=right}}
  \begin{subfigure}[t]{0.45\textwidth}
  \begin{tikzpicture}
    \begin{loglogaxis}[
      width=1.05\textwidth,
      height=0.37\textheight,
      xlabel=$h$,
      ylabel=$e$,
      legend pos = south east,
      legend entries = {$p=1$, $p=2$,$p=3$, $p=4$, $p=5$ },
      ]

      \addplot table[x=h,y=e_p1s1] {tab2d.dat};
      \addplot table[x=h,y=e_p2s2] {tab2d.dat};
      \addplot table[x=h,y=e_p3s3] {tab2d.dat};
      \addplot table[x=h,y=e_p4s4] {tab2d.dat};
      \addplot table[x=h,y=e_p5s5] {tab2d.dat};

      \addplot[dotted] 
      table[x=h,y={create col/linear regression={y=e_p1s1}}]            
            {tab2d.dat}
            coordinate [pos=0.97] (A)
            coordinate [pos=0.81] (B);
            \xdef\slopeSqrP{\pgfplotstableregressiona}
            \draw[dotted] (A) -| (B) node [pos=0.75,anchor=west]
                         {\pgfmathprintnumber[fixed, precision=1]
                           {\slopeSqrP}};
      \addplot[dotted] 
      table[x=h,y={create col/linear regression={y=e_p2s2}}]            
            {tab2d.dat}
            coordinate [pos=0.97] (A)
            coordinate [pos=0.81] (B);
            \xdef\slopeSqrPP{\pgfplotstableregressiona}
            \draw[dotted] (A) -| (B) node [pos=0.75,anchor=west]
                         {\pgfmathprintnumber[fixed, precision=1]
                           {\slopeSqrPP}};
      \addplot[dotted] 
      table[x=h,y={create col/linear regression={y=e_p3s3}}]            
            {tab2d.dat}
            coordinate [pos=0.97] (A)
            coordinate [pos=0.81] (B);
            \xdef\slopeSqrPPP{\pgfplotstableregressiona}
            \draw[dotted] (A) -| (B) node [pos=0.75,anchor=west]
                         {\pgfmathprintnumber[fixed, precision=1]
                           {\slopeSqrPPP}};
      \addplot[dotted] 
      table[x=h,y={create col/linear regression={y=e_p4s4}}]            
            {tab2d.dat}
            coordinate [pos=0.97] (A)
            coordinate [pos=0.81] (B);
            \xdef\slopeSqrPPPP{\pgfplotstableregressiona}
            \draw[dotted] (A) -| (B) node [pos=0.75,anchor=west]
                         {\pgfmathprintnumber[fixed, precision=1]
                           {\slopeSqrPPPP}};
      \addplot[dotted] 
      table[x=h,y={create col/linear regression={y=e_p5s5}}]            
            {tab2d.dat}
            coordinate [pos=0.97] (A)
            coordinate [pos=0.81] (B);
            \xdef\slopeSqrPPPPP{\pgfplotstableregressiona}
            \draw[dotted] (A) -| (B) node [pos=0.75,anchor=west]
                         {\pgfmathprintnumber[fixed, precision=1]
                           {\slopeSqrPPPPP}};
    \end{loglogaxis}
  \end{tikzpicture}  
  \caption{Example in two space dimensions}
  \label{fig:errp}
  \end{subfigure}
  \qquad\quad
  \begin{subfigure}[t]{0.45\textwidth}
  \begin{tikzpicture}  
    \begin{loglogaxis}[
      width=1.05\textwidth,
      height=0.37\textheight,
      xlabel=$h$,
      ylabel=$e$,
      legend pos = south east,
      legend entries = {$p=1$, $p=2$, $p=3$} 
      ]

      \addplot table[x=h,y=e_p1s1] {tablerr3d.dat};
      \addplot table[x=h,y=e_p2s2] {tablerr3d.dat};
      \addplot table[x=h,y=e_p3s3] {tablerr3d.dat};

      \addplot[dotted] 
      table[x=h,y={create col/linear regression={y=e_p1s1}}]            
            {tablerr3d.dat}
            coordinate [pos=0.97] (A)
            coordinate [pos=0.81] (B);
            \xdef\slopeSqrP{\pgfplotstableregressiona}
            \draw[dotted] (A) -| (B) node [pos=0.75,anchor=west]
                         {\pgfmathprintnumber[fixed, precision=1]
                           {\slopeSqrP}};
      \addplot[dotted] 
      table[x=h,y={create col/linear regression={y=e_p2s2}}]            
            {tablerr3d.dat}
            coordinate [pos=0.97] (A)
            coordinate [pos=0.81] (B);
            \xdef\slopeSqrPP{\pgfplotstableregressiona}
            \draw[dotted] (A) -| (B) node [pos=0.75,anchor=west]
                         {\pgfmathprintnumber[fixed, precision=1]
                           {\slopeSqrPP}};
      \addplot[dotted] 
      table[x=h,y={create col/linear regression={y=e_p3s3}}]            
            {tablerr3d.dat}
            coordinate [pos=0.97] (A)
            coordinate [pos=0.81] (B);
            \xdef\slopeSqrPPP{\pgfplotstableregressiona}
            \draw[dotted] (A) -| (B) node [pos=0.75,anchor=west]
                         {\pgfmathprintnumber[fixed, precision=1]
                           {\slopeSqrPPP}};
    \end{loglogaxis}
  \end{tikzpicture}  
  \caption{Example in three space dimensions}
  \label{fig:errp3d}
  \end{subfigure}
  \caption{Convergence rates for a standing wave}
  \label{fig:err}
\end{figure}
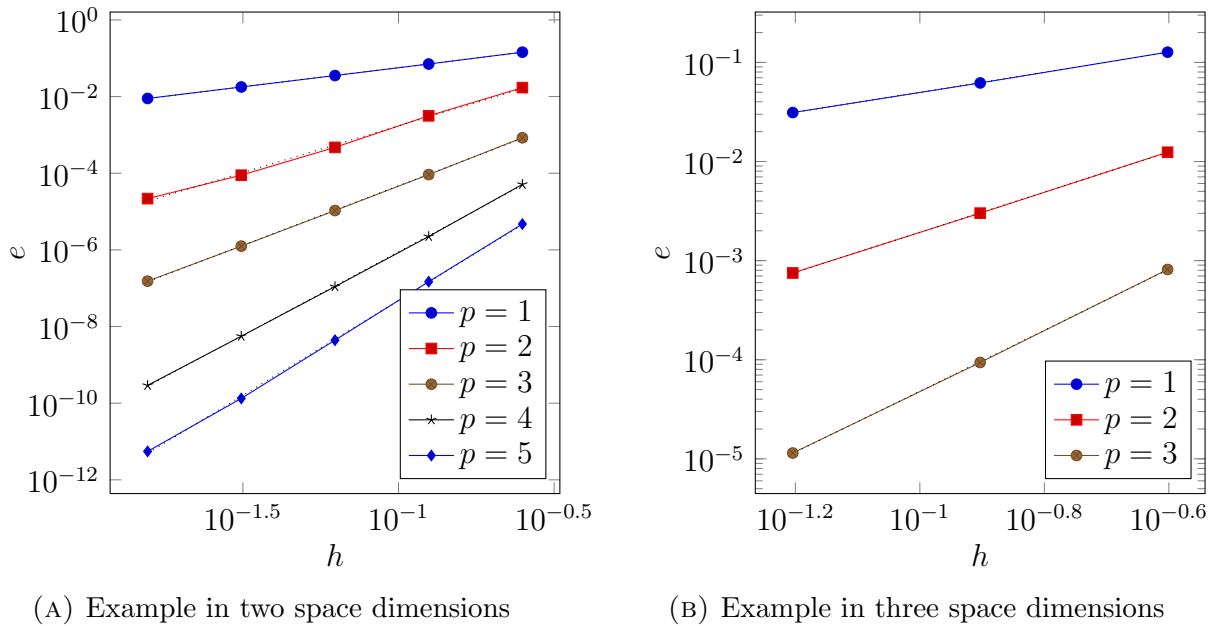

Next, consider the case of three spatial dimensions, where $\om_0$ is
set to the unit cube and subdivided in a fashion analogous to the
two-dimensional case (into $2^l \times 2^l \times 2^l$ congruent
cubes, which are further subdivided into six tetrahedra).  The
remaining parameters are the same as in the two-dimensional case,
except that now the exact solution is
$\phi(x,t) = \cos(\pi x_1) \cos (\pi x_2) \cos (\pi x_3) \sin( \pi t
\sqrt{3})/ (\sqrt{3} \pi)$.  Note that the spacetime mesh of
tents, now formed by {\em four-dimensional} simplices, continues to be
made by Algorithm~\ref{alg:pitchloc}.  
The convergence history plotted in Figure~\ref{fig:errp3d} shows that
$e$, just as in the previous case, goes to zero at a rate of
$O(h^p).$

\section{An explicit MTP scheme for a nonlinear conservation law} 
\label{sec:euler}

In this section, we describe some techniques for handling nonlinear
conservation laws, and considering the specific example of Euler
equations, construct a explicit MTP scheme.

\subsection{Mapping an entropy pair}

Recall that a real function ${\EE}(u)$ is called an entropy
\cite[Definition~3.4.1]{Serre99} of the system~\eqref{eq:1} if there
exists an entropy flux $\FF(u) \in \RRR^N$ such that every classical
solution $u$ of~\eqref{eq:1} satisfies ${\partial}_t {\EE}(u) + \divx \FF(u) = 0.$
Note that for nonsmooth~$u$, this equality need not hold.  The pair
$({\EE},{\FF})$ is called the entropy pair. We say that this pair
satisfies the ``entropy admissibility condition'' on $\om$ if 
\begin{equation}
  \label{eq:18}
{\partial}_t {\EE}(u(x,t)) + \divx {\FF}(u(x,t) ) \le 0   
\end{equation}
holds in the sense of distributions on $\om$.  The inequality is
useful to study the violation of entropy conservation for nonsmooth
solutions (like shocks). Nonlinear conservation laws often have
multiple weak solutions and uniqueness is obtained by selecting a
solution $u$ satisfying the entropy admissibility condition. These
theoretical considerations motivate the use of numerical analogues
of~\eqref{eq:18} in designing schemes for conservation laws.

Suppose that on a tent $K_i$, we are given a solution $u(x,t)$
of~\eqref{eq:1} and an entropy pair $({\EE},{\FF})$.  The mapped solution, as
before, is $\hat u = u \circ \vPhi$. Define
\begin{align} \label{eq:EFhat}
  \hat {\EE} (w)
  & = {\EE} (w) - {\FF}(w) \gradx \varphi,
  &
  \hat {\FF} ( w )
  & = 
    \delta {\FF}(w).
\end{align}

\begin{theorem} \label{thm:mappedentropy}
  Suppose $u$ solves~\eqref{eq:1} on $K_i$ and
  $\hat u = u \circ \vPhi$ solves the mapped
  equation~\eqref{eq:mapped}.  Then, whenever $({\EE},{\FF})$ is an entropy
  pair for~\eqref{eq:1}, $(\hat {\EE}, \hat {\FF})$ is an entropy pair
  for~\eqref{eq:mapped}. Moreover, if ${\EE}(u)$ and ${\FF}(u)$ satisfies the
  entropy admissibility condition~\eqref{eq:18} on $K_i$, then
  $\hat {\EE}(\hat u)$ and $\hat {\FF}(\hat u)$ satisfies the entropy
  admissibility condition 
  on $\hat K_i$.
\end{theorem}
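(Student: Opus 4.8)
The plan is to reuse the Piola-pullback computation from the proof of Theorem~\ref{thm:map}, now applied to the single spacetime entropy flux. Assemble
\[
F_{\EE}(x,t) =
\begin{bmatrix}
\FF(u(x,t)) \\ \EE(u(x,t))
\end{bmatrix}
\in \RRR^{N+1},
\qquad
\hat F_{\EE} = \det[\hD\vPhi]\,[\hD\vPhi]^{-1}\,(F_{\EE}\circ\vPhi),
\]
so that the spacetime divergence of $F_{\EE}$ is $\dive F_{\EE} = {\partial}_t\EE(u) + \divx\FF(u)$. Inverting the block-triangular Jacobian~\eqref{eq:DPhi} exactly as in the proof of Theorem~\ref{thm:map} (with the roles of $\hat f_l$ and $\hat g_l$ played by $\FF$ and $\EE$) gives
\[
\hat F_{\EE} =
\begin{bmatrix}
\delta\,\FF(\hat u) \\ \EE(\hat u) - \gradh\varphi\cdot\FF(\hat u)
\end{bmatrix}
=
\begin{bmatrix}
\hat\FF(\hat u) \\ \hat\EE(\hat u)
\end{bmatrix},
\]
recognizing~\eqref{eq:EFhat}, so that $\hatdiv\hat F_{\EE} = \hat{\partial}_t\hat\EE(\hat u) + \divh\hat\FF(\hat u)$. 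For a classical solution the Piola identity~\eqref{eq:4} gives the pointwise relation $\hatdiv\hat F_{\EE} = \delta\,(\dive F_{\EE})\circ\vPhi$; since $\delta\ne 0$ on $\hK_i$, the entropy conservation $\dive F_{\EE}=0$ for $u$ is equivalent to $\hatdiv\hat F_{\EE}=0$ for $\hat u$. As every classical solution $\hat u$ of~\eqref{eq:mapped} corresponds to a classical solution $u = \hat u\circ\vPhi^{-1}$ of~\eqref{eq:1} by Theorem~\ref{thm:map}, this proves that $(\hat\EE,\hat\FF)$ is an entropy pair for~\eqref{eq:mapped}.

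For the admissibility statement the target is the same relation, but read as a distributional inequality: condition~\eqref{eq:18} on $K_i$ reads $\dive F_{\EE}\le 0$ in the sense of distributions, and we must deduce $\hatdiv\hat F_{\EE}\le 0$ in the sense of distributions on $\hK_i$. The main obstacle is that the pointwise identity~\eqref{eq:4} is no longer available, because across shocks $u$, and hence $F_{\EE}$, need not be smooth; the relation between the two divergences must instead be established weakly, and with the correct sign.

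I would carry this out by duality. Given an arbitrary test function $\hat\phi\ge 0$ on $\hK_i$, set $\phi = \hat\phi\circ\vPhi^{-1}$, which is again nonnegative since $\vPhi$ is a bijection of $\hK_i$ onto $K_i$. Writing $\nabla$ for the full spacetime gradient, the chain rule gives $\nabla\hat\phi = [\hD\vPhi]^t\,(\nabla\phi)\circ\vPhi$; combining this with the algebraic identity $[\hD\vPhi]^{-1}a\cdot[\hD\vPhi]^t c = a\cdot c$ and the change of variables $z = \vPhi(\hat x,\hat t)$, whose Jacobian factor $\det[\hD\vPhi]=\delta$ cancels the Piola determinant, yields
\[
\ip{\hatdiv\hat F_{\EE},\,\hat\phi}
= -\int_{\hK_i} \hat F_{\EE}\cdot\nabla\hat\phi
= -\int_{K_i} F_{\EE}\cdot\nabla\phi
= \ip{\dive F_{\EE},\,\phi} \le 0.
\]
The one structural fact that makes the sign come out correctly is $\delta = \det[\hD\vPhi] > 0$ throughout $\hK_i$ (a consequence of $\tau_i > \tau_{i-1}$ on the open patch, guaranteed by the causality constraint): this makes $\vPhi$ orientation-preserving, so the change of variables introduces no sign change and nonnegativity of test functions is preserved. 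I expect this sign-tracking in the distributional change of variables to be the only genuinely delicate point; the algebra reproducing~\eqref{eq:EFhat} is exactly the block-triangular computation already performed in the proof of Theorem~\ref{thm:map}.
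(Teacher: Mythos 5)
Your proposal is correct and takes essentially the same route as the paper: the paper's proof consists of one line, repeating the computation of Theorem~\ref{thm:map} with $(g,f)$ replaced by $(\EE,\FF)$ to arrive at the pointwise identity $(\partial_t\EE(u)+\divx\FF(u))\circ\vPhi=\delta^{-1}\bigl(\hat\partial_t\hat\EE(\hat u)+\divxh\hat\FF(\hat u)\bigr)$, ``from which the statements of the theorem follow,'' and your block-triangular computation of $\hat F_{\EE}$ reproduces exactly this. Your duality argument for the admissibility inequality fleshes out the distributional step that the paper leaves implicit, and the sign bookkeeping is right; note only that positivity of $\delta=\det[\hD\vPhi]$ on the open cylinder comes from the positive tent-pole height ($\delta=k_i\eta_i>0$ on $\om_{\vi}$), not from the causality constraint, which bounds $k_i$ from above. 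One technical point worth a remark if written out in full: since $\tau_{i-1},\tau_i\in P_1(\TT)$, the map $\vPhi$ is only piecewise smooth, so $\phi=\hat\phi\circ\vPhi^{-1}$ is merely Lipschitz rather than $C^\infty$; to test the nonpositive distribution $\partial_t\EE(u)+\divx\FF(u)$ against such $\phi$ one should invoke the standard fact that a nonpositive distribution is a nonpositive Radon measure (or argue by mollification), after which your change-of-variables computation goes through verbatim.
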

\begin{proof}
  Repeating the calculations
  in the proof of Theorem~\ref{thm:map}, with $g = {\EE}$ and $f = {\FF}$, 
  we  obtain 
  \[
  ({\partial}_t {\EE} (u) + \divx {\FF}(u)) \circ \vPhi
  = 
  \frac{1}{\delta}
  \left(
    \hat {\partial}_t \hat {\EE} (\hat u) + \divxh \hat {\FF} (\hat u)
  \right),
  \]
  from which the statements of the theorem follow.
\end{proof}

\subsection{Entropy viscosity regularization} \label{ssec:entrvisc}

The addition of ``artificial viscosity'' (a diffusion term) to the
right hand side of nonlinear conservation laws makes their solutions
dissipative.  When the limit of such solutions, as the diffusion term
goes to zero, exist in some sense, it is referred to as a vanishing
viscosity solution. It is known \cite[Theorem~4.6.1]{Dafer10} that the
vanishing viscosity solution satisfies the entropy admissibility
condition for entropy pairs satisfying certain conditions.  
Motivated by such connections, 
the
entropy viscosity regularization method of~\cite{GuermPasquPopov11},
suggests modifying numerical schemes by
selectively adding small amounts of artificial viscosity, to avoid
spurious oscillations near discontinuous solutions. We borrow this
technique and incorporate it  into the MTP schemes obtained using the
second approach (of \S\ref{ssec:second-approach}) as follows.

Consider the problem on the tent $K_i$ mapped to $\hat K_i$. We set
the spatial discretization space to
$X_i = \{ u \in L^2(\om_\vi)^L: u|_T \in P_p(T) $ for all
$T \in \TT_i \}$ and consider a DG discretization of the mapped
equation~\eqref{eq:mapped} following the second approach. Accordingly
the approximation $\hat U_h(\hat x, \hat t)$ takes the form
in~\eqref{eq:secondapp}. Let $(\cdot, \cdot)_h$ and
$\ip{ \cdot, \cdot}_h$ denote the sum of integrals over $T$ and ${\partial} T$
of the appropriate inner product of its arguments, over all
$T \in \TT_i$, respectively.
The semidiscretization of~\eqref{eq:mapped}
by the DG method takes the form
\begin{equation}
  \label{eq:DGsemi}
  (\hat {\partial}_t \hat U_h, V)_h 
  -( \delta \, f( \hat G^{-1}(\hat U_h)), \gradx V )_h
  + \ip{ \delta Q_f(\hat U_h), V }_h
  + (\delta \, b , V)_h = 0 
\end{equation}
for all $V \in X_i$. Here $Q_f$ is the so-called ``numerical flux,''
whose form varies depending on the DG method, and as usual, all
derivatives are taken element by element.

Suppose that an entropy pair $({\EE},{\FF})$ is given for~\eqref{eq:1}. On the
mapped tent $\hat K_i$, let $(\hat {\EE}, \hat {\FF})$ be defined
by~\eqref{eq:EFhat}.  Suppose a numerical approximation
$\hat U_h(\hat x, \hat t_1)$ has been computed at some time
$0\le \hat t_1 <1$ and we want to compute a numerical approximation at
the next time stage, say at $\hat t = \hat t_1 + \Delta t \le 1$.  The
{\em entropy residual} of the approximation
$u_h = \hat G^{-1}(\hat U_h)$ to $u$ is a weak form of the quantity
$\hat{\partial}_t\hat {\EE}( \hat u_h) + \divxh \hat {\FF}( \hat u_h)$, which by
Theorem~\ref{thm:mappedentropy}, is non-positive. The discrete entropy
residual at time $\hat t_1$ is $R_h = \min( r_h,0)$ where
$r_h \in X_i$ is defined by
\begin{align*}
(\delta r_h, V)_h 
  & = (\hat {\partial}_t \hat {\EE}( \hat G^{-1} (\hat U_h) ),  V)_h
- (\hat {\FF}( \hat U_h), \gradx V)_h + 
\ip{ \delta Q_{\FF} (\hat U_h), V}_h
  \\
  & = 
(\frac{{\partial} (\hat {\EE} \circ \hat G^{-1})}{{\partial} U} \hat {\partial}_t\hat U_h,  V)_h
- (\hat {\FF}( \hat U_h), \gradx V)_h + 
\ip{ \delta Q_{\FF} (\hat U_h), V}_h
\end{align*}
for all $V \in X_i$.  Here  $Q_{\FF}$ is a numerical flux
prescribed by a DG approximation to the entropy conservation
equation. The term $\hat {\partial}_t \hat U_h$ can be replaced by its
approximation available from~\eqref{eq:DGsemi} while computing $r_h$.

Next, following~\cite{GuermPasquPopov11}, we quantify the amount of
viscosity to be added to~\eqref{eq:DGsemi}. Define the {\em entropy
  viscosity coefficient} on one spatial element $T\in \TT_i$ by
$
  \nu_e^T =
  c_X^2  \| R_h \|_{L^\infty(T)}/ |\bar {\EE}|
$
where $\bar {\EE}$ is the mean value of $\hat {\EE}( \hat G^{-1} (\hat U_h))$
on $T$ and $c_X$ is an effective local grid size of~$X_i$, typically
chosen as
$
c_X = \kappa_1
 \diam(T)/p
$
for some fixed number $\kappa_1$.  To limit the viscosity added based
on local wavespeed, define
$  \nu_*^T = \kappa_2  \diam(T) 
  \| D_u \hat f( \hat x, \hat t_1, \hat u_h(\hat x, \hat t_1) ) \|_{L^\infty(T)}
$
where $\kappa_2$ is another fixed number and set 
$
\nu_i = \max_{T \in \TT_i}  \min (\nu_*^T, \nu_e^T).
$
This artificial viscosity coefficient proposed
in~\cite{GuermPasquPopov11} leads to generous viscosity at
discontinuities (where the entropy residual is high) and little
viscosity in smooth regions. Finally, we modify the mapped
equation~\eqref{eq:mapped} by adding to its right hand side the
corresponding artificial viscosity term
$\nu_i \divxh ( \delta \gradx \hat u)$.  Namely, instead
of solving~\eqref{eq:DGsemi} for
$\hat t_1 \le \hat t \le \hat t_1 + \Delta t,$ we solve its viscous
perturbation:
\begin{equation}
  \label{eq:mappedev}
  \begin{aligned}
  (\hat {\partial}_t \hat U_h, V)_h 
  -( \delta \, f( \hat G^{-1}(\hat U_h)), \gradx V )_h
  & + \ip{ \delta Q_f(\hat U_h), V }_h
  + (\delta \, b , V)_h
  \\
  & + 
  \nu_i a_i( \hat G^{-1}(\hat U_h), V) = 0,
  \end{aligned}
\end{equation}
for all $v \in X_i$, where $a_i(\cdot,\cdot)$ is the standard interior
penalty DG approximation of the viscous term
$-\divxh (\delta \gradx \hat u)$ defined below.  On an interface $F$
shared by two elements $T_+$ and $T_-$, with outward unit normals
$n_+$ and $n_-$, respectively, set $[wn]=w|_{T_+} n_ + + w|_{T-} n_-$,
with the understanding that $w(x,t)$ is considered zero if $x$ is
outside $\om_{\vi}$. Then
\[
a_i(w,v) =  (\delta  \gradx w, \gradx v)_h
-\frac 1 2 \ip{\delta \gradx w, [vn]}_h 
-\frac 1 2 \ip{[wn],\delta \gradx v}_h
+ \frac{\alpha}{2 h } \ip{ \delta[wn],[vn]}_h.
\]
Here, as usual, the penalization parameter $\alpha$ must be chosen
large enough to obtain coercivity.  Applying a time stepping algorithm
to~\eqref{eq:mappedev}, we compute the numerical solution at the next
time stage $ t_1 + \Delta t$.

\subsection{Application to Euler equations}

Let $\rho: \om \to \RRR$, $m:\om \to \RRR^N$ and $E: \om \to \RRR$
denote the density, momentum, and total energy of a perfect gas
occupying $\om \subset \RRR^N$.  Set $L = N+2$ and let
\[
u=
\begin{bmatrix}
  \rho \\ m \\ E
\end{bmatrix}, 
\quad g( u) = u\,,
\quad
f( u) = 
\begin{bmatrix}
  m
  \\
  \pp I + m \otimes m/ \rho 
  \\
  ( E + \pp)  m / \rho
\end{bmatrix}, 
\quad b \equiv 0\,,
\]
Here, the pressure $\pp$ is related to the state variables by 
$
\pp = \frac{1}{2}\rho \tT,
$ and $
\tT = \frac 4 d ( 
\frac{E}{\rho} - \frac 1 2 \frac{|m|^2}{\rho^2}),
$
where $d$, the degrees of freedom of the gas particles, is set to $5$
for ideal gas.  With these settings, the system of Euler equations is
given by~\eqref{eq:1}.

After mapping from a tent $K_i$ to $\hat K_i$, to proceed with the
second approach we need to invert the nonlinear equation
$\hat U = \hat G (\hat u)$. Namely, writing
$\hat u = (\hat \rho, \hat m, \hat E)$ and
$\hat U = ( \hat R, \hat M, \hat F)$, we want to explicitly compute
$ (\hat \rho, \hat m, \hat E) = \hat G^{-1} ( \hat R, \hat M, \hat F)$.
Lengthy calculations (see~\cite{Winte15}) show that the expression for
$\hat G^{-1}$ is given by
\begin{align*}
  \hat \rho 
  & = \ds{ \frac{\hat R^2}{ a_1 - \frac{2}{d}|\gradx \varphi|^2 a_3}},
   &
  \hat m
  & = \ds{\frac{\hat \rho }{\hat R} ( \hat M +  \frac 2 d a_3 \gradx \varphi  )},
   &
  \hat E 
  & = 
   \ds{\frac{\hat \rho }{\hat R}
   ( \hat F + \frac{2a_3}{d\hat \rho} \gradx \varphi \cdot \hat m  )}
\end{align*}
where 
$
a_1 = \hat R - \hat M \cdot \gradx \varphi,
$
$
a_2= 2 \hat F \hat R - |\hat M|^2,
$ and $
a_3= a_2/( a_1 + \sqrt{a_1^2 - \frac{4(d+1)}{d^2} |\gradx \varphi|^2 a_2)}.
$
The well-known expressions for the entropy and entropy flux for the
Euler system are
$
\EE( \rho,m,E) = \rho \left(
   \ln \rho - \frac{d}{2} \ln \tT
\right)$ and $
\FF(\rho,m,E) = m \EE /\rho.
$
With these expressions we discretize the mapped equation using the
second approach, applying the previously described entropy viscosity
regularization of~\eqref{eq:mappedev}.

\subsection{A computational illustration}

 \begin{figure}
   \centering
   \begin{subfigure}[t]{0.9\textwidth}
     \begin{center}
     \begin{tikzpicture}[scale=3]
      \draw (0,0) -- (0.6,0) node[midway,above] {\tiny{reflect}}    
      -- (0.6,0.2)   node[near end,above,sloped] {\tiny{reflect}};

      \draw (0.6,0.2) 
      -- (3,0.2)     node[midway,above] {\tiny{reflect}} ;

      \draw (3,0.2)
      -- (3,1)       node[midway,sloped,above] {\tiny{outflow}}      
      -- (0,1)       node[midway,below] {\tiny{reflect}}
      -- (0,0)       node[sloped,midway,above] {\tiny{inflow}}; 

      \draw[mark=|,mark size=0.4pt,-latex] 
      plot coordinates {(0,0)} node [below]{\scriptsize $0$} -- 
      plot coordinates {(0.6,0)} node [below]{\scriptsize $0.6$} -- 
      plot coordinates {(3,0)} node[below]{\scriptsize $3$} -- (3.15,0) node[right] {$x_1$};
      \draw[mark=-,mark size=0.4pt,-latex] 
      plot coordinates {(0,0)} node [left]{\scriptsize $0$} -- 
      plot coordinates {(0,0.2)} node [left]{\scriptsize $0.2$} -- 
      plot coordinates {(0,1)} node [left]{\scriptsize $1$} -- (0,1.15) node[above] {$x_2$};
    \end{tikzpicture}
    \end{center}
    \caption{Geometry and  boundary conditions}
    \label{fig:mach3:geom}
  \end{subfigure}
  \begin{subfigure}{0.9\textwidth}
      \begin{tikzpicture}
        \node (C) at (1.3,3) {\includegraphics[height=1.2in]
          {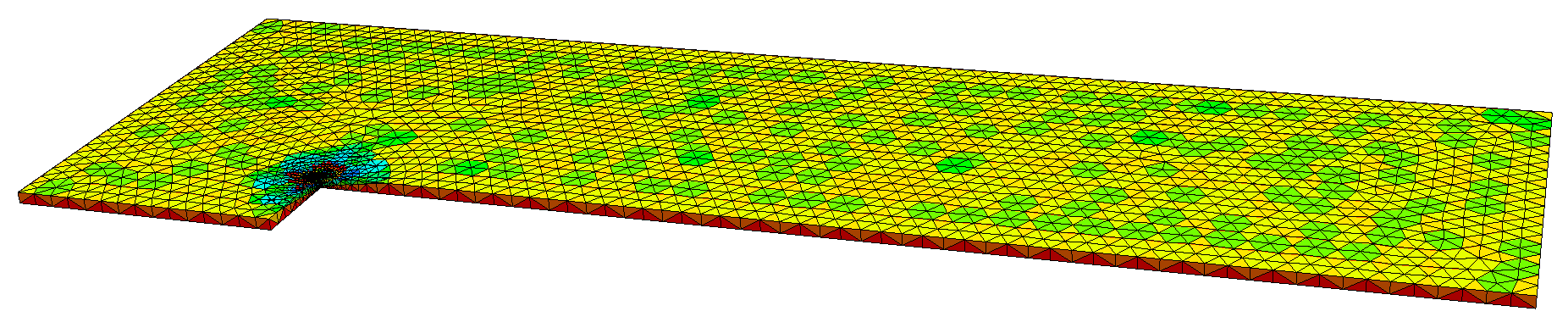}};
        {
          \node[circle,draw, inner sep=45,
          path picture={
            \node at ($(path picture bounding box.center)+(0,0.55)$) {
              \includegraphics[width=5.6cm]
              {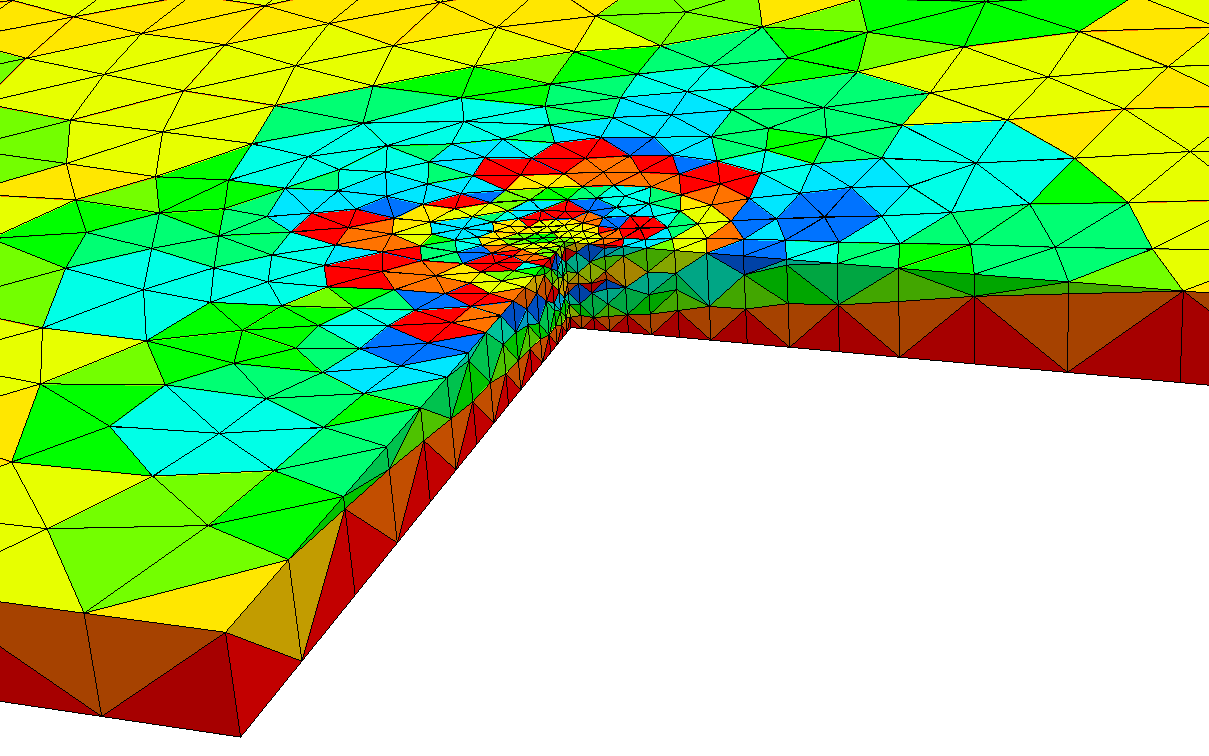}
            };
          }] (A) {};
          \path (A) (138.8:4.08) node[circle,draw,inner sep=10] (B) {};
          \draw[thick,<-] (A)--(B);
        }
      \end{tikzpicture}
      \caption{Locally refined tent mesh of one time slab and zoomed
        in view. (The tent colors cycle through the parallel layer numbers -- see Remark~\ref{rem:partents}.)}      \label{fig:mach3:mesh}
    \end{subfigure}
 \end{figure}
 
 We consider the well-known example~\cite{WoodwColel84} of a
 wind tunnel with a forward facing step on which a Mach 3 flow
 impinges. The geometry is shown in Figure~\ref{fig:mach3:geom} and
 the initial conditions are set to $ \rho = 1.4,$
 $m = \rho\begin{bmatrix} 3 & 0 \end{bmatrix}^t$, and $ \pp = 1.$ The
 boundary conditions are set such that $(0,x_2)$ is an inflow boundary
 and $(3,x_2)$ is a free boundary, which has no effect on the
 flow. All other boundaries are solid walls. 
 Anticipating the singularity at the nonconvex corner, we construct a
 spatial mesh with small elements near it.
 Figure~\ref{fig:mach3:mesh} shows this mesh and the unstructured
 locally adaptive time advance that is possible.

Using the notation of (\ref{eq:appr-2}) and a basis $\psi_l$ of $X_i$, we obtain the ODE system
$(\mM  \mU(\hat t))' = 
  \mR^1 (\mU(\hat t)) - \mR^2 (\mU(\hat t)),$
for $0 < \hat t < 1$,
where
$  [\mR^1]_l = 
  ( \delta  f( \hat G^{-1}(\hat U_h)), \gradx \psi_l )_h
  - \ip{
    \delta Q_f(\hat U_h), \psi_l }_h$ and 
$  [\mR^2]_l = 
  \nu_i a_i( \hat G^{-1}(\hat U_h), \psi_l)$.
This system within each tent is solved by a time stepping scheme and a
time step $\Delta t = \tfrac{1}{m}$, where $m$ denotes the number of local time
steps. For stability we need $m\ge O(p^2)$, but more time steps may be
used for accuracy.  Due to the addition of artificial viscosity, an
additional fractional time step $\Delta t_v$ is chosen depending on
the viscosity coefficient (and therefore on the smoothness of the
solution). A detailed algorithm based on the explicit Euler method can be found below, where we use
the notations $\mU^j := \mU(j\Delta t)$ and $\delta_* = \| \delta \|_{L^\infty(\om_{\vi})}$.

\begin{algorithm} 
For $j = 0,\dots,m-1$ do: 
\begin{itemize}
\item Evaluate $\mR^1(\mU^j)$.
\item Update solution $\mU^{j+1} = \mU^{j} + \Delta t \,\mR^1(\mU^j)$.
\item Calculate the entropy residual and the viscosity 
coefficient $\nu_i(j\Delta t)$.
\item Estimate time step $\Delta t_v = \Delta t / \frac{\delta_*\nu_i p^4}{h^2}$ for the artificial viscosity.
\item Apply the artificial viscosity with an explicit Euler method up to the time $(j+1)\Delta t$.
\end{itemize}
\end{algorithm}

This algorithm can be generalized for any Runge-Kutta scheme and for the following results a two-staged RK scheme was used.
A kinetic flux (see \cite{MandalDesh}) was used 
for the numerical flux $Q_f$ while   $Q_{\FF}$ was set by
\begin{align*}
  Q_{\FF} & =
  \begin{cases}
    \FF(\hat\rho^+,\hat m^+,\hat E^+)\cdot n\,, & \hat m^+\cdot n \ge 0\,, \\
    \FF(\hat\rho^-,\hat m^-,\hat E^-)\cdot n\,, & \text{otherwise}\,, \\
  \end{cases}
\end{align*}
where $\hat \rho^+$ denotes the trace of $\hat \rho$ from within the element which has $n$ as outward unit normal vector.
For computational convenience, we use a slight variation
of the 
entropy viscosity regularization described in \S\ref{ssec:entrvisc}. Namely,
the entropy viscosity coefficient on one element $T\in \TT_i$ is
set  by
$
  \nu_e^T = c_X^2 \| R_h \|_{L^\infty(T)}
$
and the limiting artificial viscosity is set by 
$
  \nu_*^T = \kappa_2  \diam(T) 
  \| \rho( |\frac{m}{\rho}| + \sqrt{\gamma \tT}) \|_{L^\infty(T)}
$
with $\gamma = \frac{d+2}{d} = 1.4$ for an ideal gas and the temperature $\tT$.
The constants in the calculation of the entropy viscosity coefficient were chosen as $\kappa_1 = \frac{1}{2}$, $\kappa_2 = \frac{1}{4p}$ and the penalization parameter $\alpha$ in the artificial viscosity term is set to 2.

\begin{figure}
  \centering
  \includegraphics[width=0.9\textwidth,trim=0 1450 380 0,clip]
  {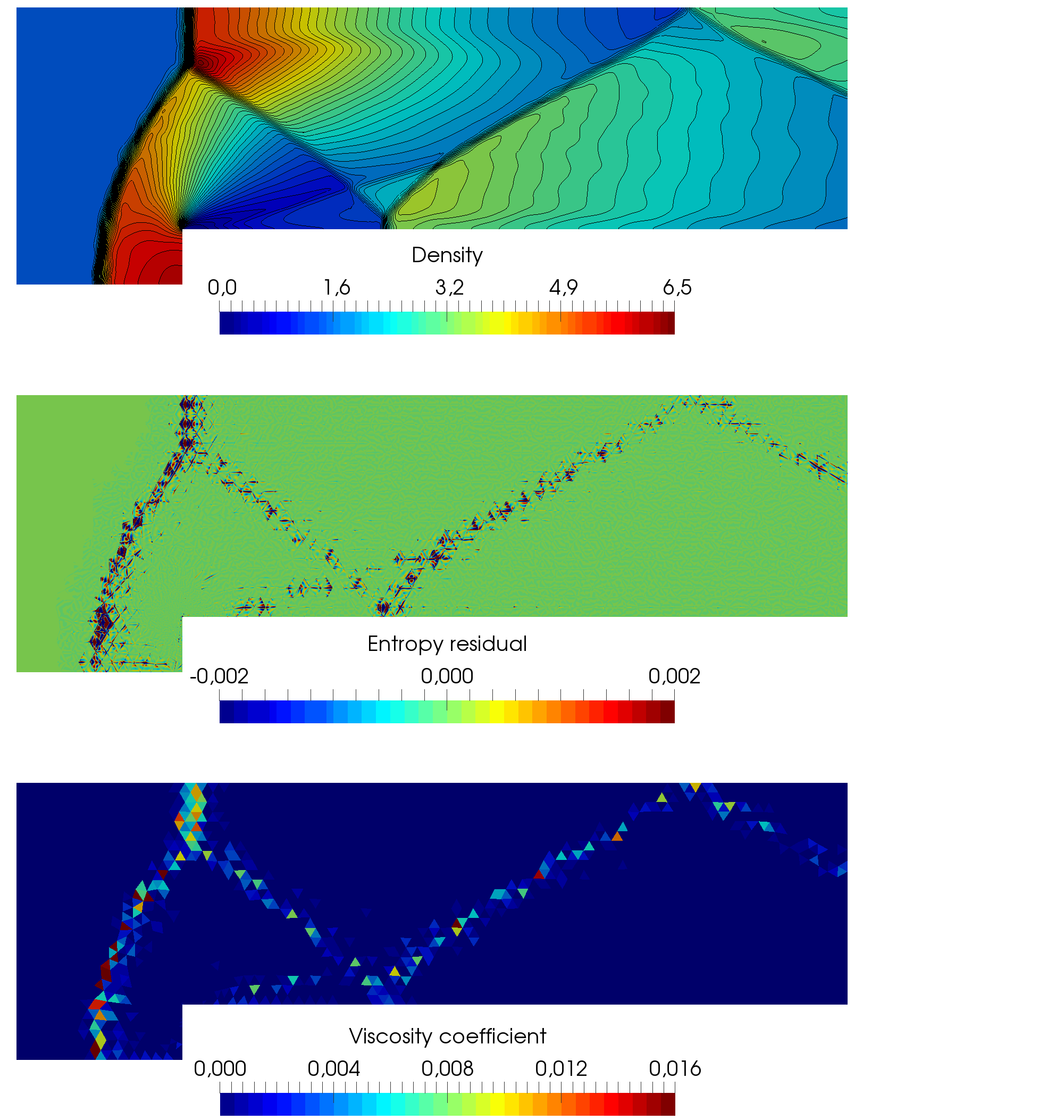}
  \includegraphics[width=0.9\textwidth,trim=0 0 380 1450,clip]
  {wind_tunnel_p4_t4_newvisc_kinflux2_iso50.png}
  \caption{Solution of Mach 3 wind tunnel at $t=4$, with $p=4$ DG
    finite elements on 3951 triangles}
  \label{fig:windtunnel_p4t4}
\end{figure}

With these settings, the results obtained with $p=4$, are shown in
Figure \ref{fig:windtunnel_p4t4}. They correspond to the
results~\cite{WoodwColel84} that can be found in the literature using
other methods. Note from the second plot that the artificial viscosity
is applied only in the shocks.

\section{Conclusion}

We have introduced new schemes, called MTP schemes, for advancing
hyperbolic solutions through unstructured tent meshes.  The advantages
of tent pitching over traditional time stepping, amply
clarified by others in the literature on SDG methods, include the
ability to advance in time by different amounts at different spatial
locations, easy parallelization, and linear scaling of computational
complexity in the number of tents. 

Further new advantages brought about by MTP schemes include the
possibility to use existing spatial discretizations and time stepping
schemes after mapping tents to cylinders. The mapping technique has
opened a new avenue to perform fully explicit matrix-free local
time stepping on unstructured tent meshes using explicit MTP schemes.
Their utility as a powerful computational tool was demonstrated on the
Mach~3 wind tunnel where local refinement near a rarefaction
singularity permitted us to capture the shock structure using 
relatively few elements and standard discretizations (in separated
space and time).

We also studied locally implicit MTP schemes and their application to
the acoustic wave equation. We observed $O(h^p)$ accuracy (in the
$L^2$ norm) when spatial basis functions of degree $p$ were used.  In
contrast, SDG schemes report $O(h^{p+1})$ accuracy if spacetime basis
functions of degree $p$ are used.  Thus in order to get the same
accuracy, MTP schemes use one higher spatial order. Despite this, the
locally implicit MTP scheme, due to its separation of time and space,
is more efficient than SDG schemes.  As $p$ increases, SDG schemes use
$O(p^{N+1})$ spacetime basis functions per tent, while MTP schemes use
$O((p+1)^N)$ spatial basis functions to obtain the same convergence
rate.  Hence to propagate the solution inside a tent, an SDG scheme
performs $O(p^{2(N+1)})$ flops, while the implicit MTP scheme performs
$O( (p+1)^{2N} )$ flops.  Since $(p+1)^{2N} < p^{2(N+1)}$ for $p\ge 3$
in both two and three space dimensions ($N=2,3$), the flop count
favors the implicit MTP scheme as $p$ increases.

Ongoing studies aim to provide rigorous proofs of the convergence
rates for the  MTP schemes~\cite{GSW16} and to provide
computational benchmarks for specific applications~\cite{mtpMax}. The
promises of improved performance of explicit MTP schemes, due to
better ratio of flops per memory (data locality) and matrix-free implementation
techniques (such as sum factorization algorithms), will be realized in
future works.  Years of research on SDG schemes have resulted in
advanced techniques like spacetime adaptive tent mesh refinement and
element-wise conservation. Further studies are needed to bring such
techniques to MTP schemes.


\begin{thebibliography}{100}

\bibitem{AbediPetraHaber06}
{\sc R.~Abedi, B.~Petracovici, and R.~B. Haber}, {\em A spacetime discontinuous
  Galerkin method for elastodynamics with element-wise momentum balance},
  {Computer Methods in Applied Mechanics and Engineering}, 195 (2006),
  pp.~3247--3273.

\bibitem{Dafer10}
{\sc C.~M. Dafermos}, {\em Hyperbolic conservation laws in continuum physics},
  vol.~325 of Grundlehren der Mathematischen Wissenschaften, 
  Springer-Verlag, Berlin, third~ed.,
  2010.

\bibitem{DiazGrote09}
{\sc J.~Diaz and M.~J. Grote}, {\em Energy conserving explicit local time
  stepping for second order wave equations}, SIAM J. Sci. Comput., 31 (2009),
  pp.~1985--2014.

\bibitem{ErickGuoySulli05}
{\sc J.~Erickson, D.~Guoy, J.~M. Sullivan, and A.~{\"U}ng{\"o}r}, {\em Building
  spacetime meshes over arbitrary spatial domains}, Engineering with Computers,
  20 (2005), pp.~342--353.

\bibitem{FalkRicht99}
{\sc R.~S. Falk and G.~R. Richter}, {\em Explicit finite element methods for
  symmetric hyperbolic equations}, SIAM J. Numer. Anal., 36 (1999),
  pp.~935--952.


\bibitem{GandeHalpe13}
{\sc M.~Gander and L.~Halpern}, {\em Techniques for locally adaptive time
  stepping developed over the last two decades}, in Domain Decomposition
  Methods in Science and Engineering XX, R.~Bank, M.~Holst, O.~Widlund, and
  J.~Xu, eds., vol.~91 of Lecture Notes in Computational Science and
  Engineering, Springer Berlin Heidelberg, 2013, pp.~377--385.

\bibitem{GopalMonkSepul15}
{\sc J.~Gopalakrishnan, P.~Monk, and P.~Sep\'ulveda}, {\em A tent pitching
  scheme motivated by {Friedrichs} theory}, Computers and Mathematics with
  Applications, 70 (2015), pp.~1114--1135.

\bibitem{GSW16}
{\sc J.~Gopalakrishnan, J.~Sch\"oberl, and C.~Wintersteiger}, {\em
  Convergence analysis of an MTP scheme}, in preparation (2016).


\bibitem{GroteMehliMitko15}
{\sc M.~J. Grote, M.~Mehlin, and T.~Mitkova}, {\em {Runge}-{Kutta} based
  explicit local time-stepping methods for wave propagation}, SIAM J. Sci.
  Comput., 37 (2015), pp.~A747--A775.


\bibitem{GuermPasquPopov11}
{\sc J.-L. Guermond, R.~Pasquetti, and B.~Popov}, {\em Entropy viscosity method
  for nonlinear conservation laws}, J. Comput. Phys., 230 (2011),
  pp.~4248--4267.

\bibitem{HaireWanne91}
{\sc E.~Hairer and G.~Wanner}, {\em Solving ordinary differential equations.
  {II}}, vol.~14 of Springer Series in Computational Mathematics,
  Springer-Verlag, Berlin, 1991.
\newblock Stiff and differential-algebraic problems.

\bibitem{JiangTadmo98}
{\sc G.-S. Jiang and E.~Tadmor}, {\em Nonoscillatory central schemes for
  multidimensional hyperbolic conservation laws}, SIAM J. Sci. Comput., 19
  (1998), pp.~1892--1917 (electronic).

\bibitem{LowriRoeLeer95}
{\sc R.~B. Lowrie, P.~L. Roe, and B.~van Leer}, {\em A space-time discontinuous
  {G}alerkin method for the time-accurate numerical solution of hyperbolic
  conservation laws}, in Proceedings of the 12th AIAA Computational Fluid
  Dynamics Conference, no.~95-1658, 1995.

\bibitem{MandalDesh}
  {\sc J.~C. Mandal and S.~M. Deshpande},
  {\em Kinetic flux vector splitting for Euler equations},
  Comput. \& Fluids, 23(2) (1994), pp.~447--478.

\bibitem{MilleHaber08}
{\sc S.~T. Miller and R.~B. Haber}, {\em A spacetime discontinuous Galerkin
  method for hyperbolic heat conduction}, Computer Methods in Applied Mechanics
  and Engineering, 198 (2008), pp.~194--209.

\bibitem{MonkRicht05}
{\sc P.~Monk and G.~R. Richter}, {\em A discontinuous {G}alerkin method for
  linear symmetric hyperbolic systems in inhomogeneous media}, J. Sci. Comput.,
  22/23 (2005), pp.~443--477.

\bibitem{Mont11}
{\sc A.~D. Mont}, {\em Adaptive unstructured spacetime meshing for
  four-dimensional spacetime discontinuous {Galerkin} finite element methods},
  Master's thesis, University of Illinois at Urbana-Champaign, 2011.

\bibitem{Neumu13}
{\sc M.~Neum{\"u}ller}, {\em Space-Time Methods: Fast Solvers and
  Applications}, PhD thesis, Graz University of Technology, 2013.


\bibitem{PalanHaberJerra04}
{\sc J.~Palaniappan, R.~B. Haber, and R.~L. Jerrard}, {\em A spacetime
  discontinuous Galerkin method for scalar conservation laws}, Computer Methods
  in Applied Mechanics and Engineering, 193 (2004), pp.~3607--3631.


\bibitem{Richt94a}
{\sc G.~R. Richter}, {\em An explicit finite
  element method for the wave equation}, Appl. Numer. Math., 16 (1994),
  pp.~65--80.
\newblock A Festschrift to honor Professor Robert Vichnevetsky on his 65th
  birthday.


\bibitem{Schob14}
{\sc J.~Sch{\"{o}}berl}, {\em C++11 implementation
  of finite elements in {NGSolve}}, To appear,  (2016).

\bibitem{mtpMax}
{\em MTP schemes for Maxwell equations}, 
  in preparation,  (2016).

\bibitem{Serre99}
{\sc D.~Serre}, {\em Systems of conservation laws. 1}, Cambridge University
  Press, Cambridge, 1999.
\newblock Hyperbolicity, entropies, shock waves, Translated from the 1996
  French original by I. N. Sneddon.

\bibitem{UngorSheff02}
{\sc A.~{\"U}ng{\"o}r and A.~Sheffer}, {\em Pitching tents in space-time: mesh
  generation for discontinuous {G}alerkin method}, Internat. J. Found. Comput.
  Sci., 13 (2002), pp.~201--221.
\newblock Volume and surface triangulations.

\bibitem{WangPerss15}
{\sc L.~Wang and P.-O. Persson}, {\em A high-order discontinuous {Galerkin}
  method with unstructured space-time meshes for two-dimensional compressible
  flows on domains with large deformations}, Comput. \& Fluids, 118 (2015),
  pp.~53--68.

\bibitem{Winte15}
{\sc C.~Wintersteiger}, {\em Mapped tent pitching method for hyperbolic
  conservation laws}, Master's thesis, Technical University of Vienna, 2015.

\bibitem{WoodwColel84}
{\sc P.~Woodward and P.~Colella}, {\em The numerical simulation of
  two-dimensional fluid flow with strong shocks}, J. Comput. Phys., 54 (1984),
  pp.~115--173.

\bibitem{YinAcharSobh00}
{\sc L.~Yin, A.~Acharia, N.~Sobh, R.~B. Haber, and D.~A. Tortorelli}, {\em A
  spacetime discontinuous {G}alerkin method for elastodynamics analysis}, in
  Discontinuous {G}alerkin Methods: Theory, Computation and Applications, B.
  Cockburn and G. Karniadakis and C. W. Shu(eds), Springer Verlag, 2000,
  pp.~459--464.

\end{thebibliography}

\end{document}